\newtheorem{Theorem}{\bf Theorem}
\newtheorem{lemma}[Theorem]{\bf Lemma}
\newtheorem{proposition}[Theorem]{\bf Proposition}
\newtheorem{theorem}[Theorem]{\bf Theorem}
\def\qed{\hfill$\Box$}
\def\scfig #1 #2 {\resizebox{#2}{!}{\includegraphics{#1}}}
\def\k{{\bf k}}
\newcommand{\be}{\begin{equation}}
\newcommand{\ee}{\end{equation}}
\def\hpic #1 #2 {\mbox{$\begin{array}[c]{l} 
\epsfig{file=#1,height=#2}\end{array}$}}
\def\wpic #1 #2 {\mbox{$\begin{array}[c]{l} 
\epsfig{file=#1,width=#2}\end{array}$}}
\begin{document}

\title[Planar algebras, cabling and the Drinfeld double]{Planar algebras, cabling and the Drinfeld double}

\author{Sandipan De}
\author{Vijay Kodiyalam}
\address{The Institute of Mathematical Sciences, Chennai, India}
\email{sandipande@imsc.res.in,vijay@imsc.res.in}


\subjclass[2010]{Primary 16T05, 16S40, 46L37}


\begin{abstract} We produce an explicit embedding of the planar algebra of the Drinfeld double of
a finite-dimensional, semisimple and cosemisimple Hopf algebra $H$ into
the two-cabling of the planar algebra of the dual Hopf algebra $H^*$
 and characterise the image.
\end{abstract}
\maketitle

\section{Introduction}

This paper is motivated by the main result of \cite{DeKdy2015} which shows
that a certain inclusion of infinite iterated crossed product algebras associated to a
finite-dimensional Hopf algebra $H$ is a crossed product by the Drinfeld double $D(H)$ of $H$. The hardest step in the proof was the identification of an explicit algebra embedding of $D(H)$ into the triple crossed product $H^* \rtimes H \rtimes H^*$. This required the machinery of Jones' planar algebras to
derive (while verifying it is merely an exercise in Sweedler's notation calisthenics).

For the rest of the paper, $\k$ will be an arbitrary algebraically closed field and $H = (H,\mu,\eta,\Delta,\epsilon,S)$ a finite-dimensional, semisimple and cosemisimple Hopf algebra over $\k$. Associated to $H$ is a planar algebra over $\k$ denoted by $P = P(H)$. The algebras $P_{0,\pm}$ and $P_{1,\pm}$ are identified with $\k$ and for $k \geq 2$, $P_{k,+} \cong H \rtimes H^* \rtimes H \rtimes  \cdots$
and $P_{k,-} \cong  H^* \rtimes H \rtimes  H^* \rtimes \cdots$ as algebras (where there are $k-1$ alternating factors, and for the natural actions of $H^*$ on $H$ and $H$ on $H^*$  defined by $f.a = f(a_2)a_1$ and $a.f = f_2(a)f_1$).

The embedding of $D(H)$ in $H^* \rtimes H \rtimes H^*$ may therefore
be regarded as a map of $P(D(H))_{2,+}$ into $P(H^*)_{4,+}$. Such maps are interesting for various reasons. For instance, one such embedding of $D(H)$
into the tensor square of $H \rtimes H^*$ is discussed in \cite{Ksh1996} and used in \cite{Ksh2011} to construct knot invariants in intrinsically three-dimensional terms. We note that $P(H^*)_{4,+}$ can be naturally regarded as a subalgebra of $P(H^*)_{5,+}$ which can be
identified with the tensor square of $H \rtimes H^*$ - both these being matrix algebras over $\k$ of size $dim(H)^2$.

It is thus a natural question to ask whether the embedding of $D(H)$
into $H^* \rtimes H \rtimes H^*$ may be extended to a planar algebra map
in some canonical fashion, and it is the affirmative answer to this question that is one of the main results of our paper. The other parts of the paper show that this planar algebra map is an embedding and characterise
the image.

\section{Preliminaries}

We review some required preliminaries before stating and proving the main theorem. 

\subsection{Semisimple and cosemisimple Hopf algebras}
The following facts about semisimple and cosemisimple Hopf algebras (over an algebraically closed field $\k$) may be found in \cite{TngGlk1998} and in \cite{LrsRdf1988}.
They are multi-matrix algebras whose dimensions, along with those of their
irreducible representations, are invertible in $\k$. The normalised (to be $1$ at the unit) traces in the
left regular representations of $H$ and $H^*$ will be denoted by $p$ and $h$ respectively. These are non-degenerate traces and are two-sided integrals, i.e.,
$hx = \epsilon(x)h = xh$ and $qp = q(1)p =pq$ for all $x \in H, q \in H^*$.
Also 
$p(h) = \frac{1}{n}$ where $n = dim(H)$. The antipodes of $H$ and $H^*$ are involutive. Some formulae involving integrals that we will use without
explicit mention are $h_1 \otimes h_2 = h_2 \otimes h_1 = Sh_1 \otimes Sh_2 = Sh_2 \otimes Sh_1$ and $xh_1 \otimes h_2 = h_1 \otimes Sxh_2$ along with the analogous formulae for $H^*$.

\subsection{Planar algebras} The notion of planar algebras has been evolving since the first paper \cite{Jns1999}. We elaborate a little
on the notion of planar algebras that we use in this paper which is different in some respects from the planar algebras that we have used in our previous work.
Assuming familiarity with some notion of planar algebras, we will be brief - see \cite{KdySnd2004} for a more leisurely discussion of the older version of planar algebras and \cite{Ghs2011} for the newer version.

Recall that the coloured operad of planar tangles underlies planar algebras.
Consider the set $Col = \{0,1,2,\cdots\} \times \{\pm 1 \}$, elements of which we refer to as colours. We will typically write a colour as $(k,\epsilon)$ where $\epsilon$ is either $+$ or $-$ and stands for $+1$ or $-1$.
A tangle is a subset of the plane that is the complement of the union of the interiors of a (possibly empty) finite numbered collection of 
internal discs in an external disc, along with the following data.
%
%
%
Each disc has an even number (again, possibly 
0) of points marked on its boundary circles.
There is also given a collection of disjoint curves on the tangle each of which is either a simple
closed curve,
or joins a marked point on one of the circles to another such, cutting each transversally.
Each marked point on a disc must be
the end-point of one of the curves.
For each disc,
 one
of its boundary arcs (= connected components of the complement of the marked points on the boundary circle) is distinguished and marked
with a $*$ placed near it.
Finally, there is given a chequerboard shading of the regions (= connected components of the complement of the curves) such 
that across any curve, the shading toggles.
A disc with $2n$ points on its boundary is said to be an $(n,+)$ disc or an $(n,-)$ disc according as its $*$-arc abuts a white or black region.
A tangle is said to be an $(n,\epsilon)$-tangle if its external disc is of colour $(n,\epsilon)$.
Tangles are defined only up to
a planar isotopy preserving the $*$-arcs, the shading and the numbering of the 
internal discs.
As is usual, we will often refer to and draw the discs as boxes with their *-arcs being their leftmost arc and sometimes omit drawing the external disc/box.

Two basic operations that can be performed on tangles are that of renumbering the internal discs or of  substitution of one tangle into a disc of another, and the collection of tangles  along with these  operations is called the coloured operad of planar tangles.
A planar algebra $P$ is an algebra over this operad. Thus $P$ is a collection (i) of vector spaces indexed by $Col$ and (ii) linear maps $Z_T^P$
indexed by tangles, with the maps being compatible with renumbering and substitution and satisfying the so-called non-degeneracy axiom.

We will refer to planar algebras in the older sense as restricted planar algebras. For these, the set of colours is the subset $\{(0,\pm),(1,+),(2,+),\cdots\}$, all discs (with the
exception of $(0,-)$-discs) have $*$-arcs abutting white regions and $P$ is a collection of vector spaces
indexed only by the subset above.
Clearly, a planar algebra naturally yields a restricted planar algebra (which we will refer to as its restriction) in the obvious manner. The converse holds too in the following categorical form - see Remark 3.6 of \cite{Ghs2011} (which treats the case when $P$ has modulus).

\begin{proposition}\label{restricted} Let $Q$ be a restricted planar algebra. There exists a
planar algebra $P$ with restriction isomorphic to $Q$. Further $P$ is unique
in the sense that if $P^1$ and $P^2$ are planar algebras with restrictions
$Q^1$ and $Q^2$ that are isomorphic (as restricted planar algebras) by the map $\phi: Q^1 \rightarrow Q^2$, then, there exists a unique planar algebra isomorphism $\tilde{\phi}: P^1 \rightarrow P^2$ that restricts to
$\phi$.
\end{proposition}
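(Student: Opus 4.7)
The plan is to build $P$ from $Q$ by enlarging the indexing set of vector spaces from the restricted palette to $Col$ and extending the operad action in a way that is \emph{forced} by natural ``rotation'' tangles. The colours missing in the restricted case are exactly $(k,-)$ for $k \geq 1$. For each such colour choose a canonical rotation tangle $\rho_k$: a $(k,-)$--tangle containing a single internal $(k,+)$--disc such that each marked point on the external disc is joined to the corresponding marked point on the internal disc by a simple arc, but with the $*$--arcs positioned so that on going from outside to inside one crosses zero curves while the $*$--arc shifts by one boundary arc (so the shading adjacent to the two $*$--arcs is opposite). Set $P_{c} := Q_c$ for every restricted colour $c$ and $P_{k,-} := Q_{k,+}$ for $k \geq 1$; the latter is the choice compelled by declaring $Z^P_{\rho_k}$ to be the identity.

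For an arbitrary tangle $T$ in the full operad, produce a ``rectified'' restricted tangle $\widehat{T}$ by gluing $\rho_k$ into each internal $(k,-)$--disc of $T$ and, if the external disc of $T$ has colour $(k,-)$, gluing $T$ into the internal disc of a copy of $\rho_k^{-1}$ (which is of the same combinatorial type as $\rho_k$ but with the roles of outer and inner discs reversed). Define $Z^P_T := Z^Q_{\widehat{T}}$, using the identifications above. First I would check that $\widehat{T}$ is well defined up to planar isotopy (the only choice is the direction of rotation, and a consistent convention, for instance ``rotate counterclockwise,'' is easily seen to be isotopy--invariant); then that renumbering is obviously respected; and finally that substitution is respected, because an internal $(k,-)$--disc of $T_1$ receiving $T_2$ produces a configuration in which $\rho_k$ (from rectifying $T_1$) cancels with $\rho_k^{-1}$ (from rectifying $T_2$) by an isotopy. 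Non--degeneracy is inherited from $Q$ since each $P_c$ is literally a vector space of $Q$.

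For uniqueness, suppose $P^i$ extends $Q^i$ and $\phi : Q^1 \to Q^2$ is given. On restricted colours $\tilde\phi$ must equal $\phi$. On a colour $(k,-)$ with $k \geq 1$, the map $Z^{P^i}_{\rho_k} : Q^i_{k,+} \to P^i_{k,-}$ is invertible in any planar algebra extending $Q^i$, since gluing $\rho_k$ into $\rho_k^{-1}$ (or vice versa) is isotopic to the identity tangle of colour $(k,+)$ (resp. $(k,-)$). The condition that $\tilde\phi$ intertwine the action of $\rho_k$ therefore forces
\[
\tilde\phi_{(k,-)} \;=\; Z^{P^2}_{\rho_k}\,\circ\,\phi_{(k,+)}\,\circ\,\bigl(Z^{P^1}_{\rho_k}\bigr)^{-1},
\]
proving uniqueness of $\tilde\phi$ at the level of vector spaces. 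That this formula in fact defines a planar algebra map (i.e.\ intertwines $Z^{P^i}_T$ for \emph{every} tangle $T$, not just for rotation tangles) follows from the same rectification procedure used in the existence argument: any $Z^{P^i}_T$ factors through $Z^{Q^i}_{\widehat T}$ and a composition of rotations, each of which $\tilde\phi$ intertwines by construction.

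I expect the main obstacle to be the operadic consistency check in step two of the existence argument, namely showing that $T \mapsto Z^P_T$ is compatible with the substitution operation. This requires a coherent, isotopy--invariant convention for rotations and a careful verification that $\rho_k$'s introduced to rectify internal discs cancel against those introduced for external discs when substituting. Everything else (well--definedness of $\widehat T$, non--degeneracy, and the forced form of $\tilde\phi$) is essentially formal once the rectification convention is fixed, and the details should match those in Remark 3.6 of \cite{Ghs2011}, with the modulus hypothesis there playing no role in the present argument.
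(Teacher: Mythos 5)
Your proposal is correct and follows essentially the same route as the paper: your rotation tangles $\rho_k$ and $\rho_k^{-1}$ are the paper's $C^{(k,-)}$ and $D_{(k,-)}$, your rectification $\widehat{T}$ is the paper's $\tilde{T} = D_{(k_0,\epsilon_0)} \circ T \circ (C^{(k_1,\epsilon_1)},\ldots)$, and the key cancellation $\rho_k \circ \rho_k^{-1} = \mathrm{id}$ is exactly the paper's observation that $C^{(k,\epsilon)} \circ D_{(k,\epsilon)}$ is the identity tangle. The uniqueness formula you derive is the paper's defining equation $\tilde{\phi}_{k,-} \circ Z^{P^1}_{C^{(k,-)}} = Z^{P^2}_{C^{(k,-)}} \circ \phi_{k,+}$ rearranged.
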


\begin{proof}[Sketch of proof] For existence, given $Q$, construct $P$ as follows. Define $P_{0,\pm} = Q_{0,\pm}$ and for $k > 0$, set $P_{k,\pm} = Q_{k,+}$. 

To define the action by tangles, first consider for every colour $(k,\epsilon)$,
the tangle $C^{(k,\epsilon)}$ which is defined to be the identity tangle of colour $(k,\epsilon)$ if $(k,\epsilon) \in \{(0,\pm),(1,+),(2,+),\cdots\}$ and to be the one-rotation tangle with an internal disc of colour $(k,+)$ and
external disc of colour $(k,-)$ otherwise. Also consider the tangle $D_{(k,\epsilon)}$  which is defined to be the identity tangle of colour $(k,\epsilon)$ if $(k,\epsilon) \in \{(0,\pm),(1,+),(2,+),\cdots\}$ and to be the inverse
one-rotation tangle with an internal disc of colour $(k,-)$ and
external disc of colour $(k,+)$ otherwise.

Now, for a $(k_0,\epsilon_0)$-tangle $T$ with internal discs of colours $(k_1,\epsilon_1),
\cdots,(k_b,\epsilon_b)$, define $Z_T^P = Z_{\tilde{T}}^Q$ where $\tilde{T} = D_{(k_0,\epsilon_0)} \circ T \circ_{(D_1,\cdots,D_b)} (C^{(k_1,\epsilon_1)},C^{(k_2,\epsilon_2)},\cdots,C^{(k_b,\epsilon_b)})$.
It is then easy to see that this defines a planar algebra structure on $P$ (with restriction $Q$),
the main observation being that $C^{(k,\epsilon)} \circ D_{(k,\epsilon)}$ is the identity tangle of colour $(k,\epsilon)$.

For uniqueness of $P$, suppose that $P^1,P^2$ are planar algebras with restrictions $Q^1,Q^2$ and $\phi: Q^1 \rightarrow Q^2$ is
a restricted planar algebra isomorphism. We need to see the existence 
and uniqueness  of a unique planar algebra isomorphism $\tilde{\phi}:   P^1 \rightarrow P^2$ that restricts to $\phi$.

The uniqueness of $\tilde{\phi}$ is because, given a colour $(k,-)$ with $k>0$, the
equation $\tilde{\phi}_{k,-} \circ Z^{P^1}_{C^{(k,-)}} = Z^{P^2}_{C^{(k,-)}}
\circ \phi_{k,+}$, must hold and $Z^{P}_{C^{(k,-)}}$
is an isomorphism for any planar algebra $P$. As for existence, define $\tilde{\phi}_{k,-}$ by the same
equation and check that this indeed gives a planar algebra isomorphism that restricts to $\phi$.
\end{proof}

%
%
%

\subsection{The planar algebra of a Hopf algebra}
Next, we recall the construction from \cite{KdySnd2006} of the planar algebra $P(H)$ associated to $H$. This depends on the choice of a square root, denoted $\delta$, of $n$ in $k$, which we will assume has been made and is fixed throughout. The planar algebra $P(H)$ is then defined to be
the quotient of the universal planar algebra on the label set $L = L_{2,+} = H$ by the set of relations in Figures \ref{fig:LnrMdl} - 
\ref{fig:XchNtp}.

\begin{figure}[!h]
\begin{center}
\psfrag{zab}{\huge $\zeta a + b$}
\psfrag{eq}{\huge $=$}
\psfrag{a}{\huge $a$}
\psfrag{b}{\huge $b$}
\psfrag{z}{\huge $\zeta$}
\psfrag{+}{\huge $+$}
\psfrag{del}{\huge $\delta$}
\resizebox{10.0cm}{!}{\includegraphics{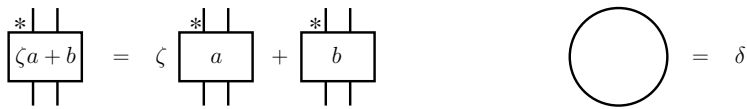}}
\end{center}
\caption{The L(inearity) and M(odulus) relations}
\label{fig:LnrMdl}
\end{figure}

\begin{figure}[!h]
\begin{center}
\psfrag{zab}{\huge $\zeta a + b$}
\psfrag{eq}{\huge $=$}
\psfrag{1h}{\huge $1_H$}
\psfrag{h}{\huge $h$}
\psfrag{z}{\huge $\zeta$}
\psfrag{+}{\huge $+$}
\psfrag{del}{\huge $\delta^{-1}$}
\resizebox{10.0cm}{!}{\includegraphics{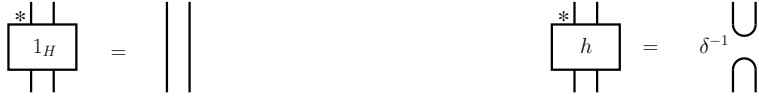}}
\end{center}
\caption{The U(nit) and I(ntegral) relations}
\label{fig:NitNtg}
\end{figure}

\begin{figure}[!h]
\begin{center}
\psfrag{epa}{\huge $\epsilon(a)$}
\psfrag{eq}{\huge $=$}
\psfrag{delinphia}{\huge $\delta p(a)$}
\psfrag{h}{\huge $h$}
\psfrag{a}{\huge $a$}
\psfrag{+}{\huge $+$}
\psfrag{del}{\huge $\delta^{-1}$}
\resizebox{10.0cm}{!}{\includegraphics{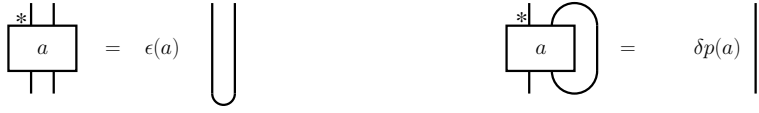}}
\end{center}
\caption{The C(ounit) and T(race) relations}
\label{fig:CntTrc}
\end{figure}

\begin{figure}[!h]
\begin{center}
\psfrag{epa}{\huge $\epsilon(a)$}
\psfrag{eq}{\huge $=$}
\psfrag{a1}{\huge $a_1$}
\psfrag{a2b}{\huge $a_2\,b$}
\psfrag{b}{\huge $b$}
\psfrag{a}{\huge $a$}
\psfrag{sa}{\huge $Sa$}
\psfrag{del}{\huge $\delta$}
\resizebox{10.0cm}{!}{\includegraphics{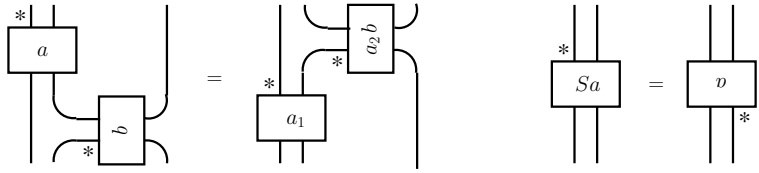}}
\end{center}
\caption{The E(xchange) and A(ntipode) relations}
\label{fig:XchNtp}
\end{figure}

In these figures, note that the shading is such that all the 2-boxes that occur are of colour $(2,+)$. Also note that the modulus relation is a pair
of relations - one for each choice of shading the circle.
%
%
%
%
Finally, note that the interchange of $\delta$ and $\delta^{-1}$ between the (I) and (T) relations here and those of \cite{KdySnd2006} is due to the different
normalisations of $h$ and $p$.


The main result of \cite{KdySnd2006} then asserts that $P(H)$ is a connected, irreducible, spherical, non-degenerate planar algebra with modulus $\delta$ and is of depth two with $dim(P(H)_{k,\pm}) = n^{k-1}$ for every $k \geq 1$. The word `non-degenerate' here refers not to the non-degeneracy axiom (which must hold for any planar algebra) but to the condition that the
trace tangles for each colour specify non-degenerate traces.

\subsection{Cabling} For any positive integer $m$, consider the `operation $T \mapsto T^{(m)}$ on
tangles' given by $m$-cabling. Some care is needed in defining this for tangles involving $(k,\epsilon)$ boxes with $\epsilon = -1$. Take the tangle
$T$, ignore its shading and thicken every strand to a cable of $m$ parallel
strands without changing the $*$'s. Now introduce shading in the result such that any $(k,\epsilon)$ box of $T$ changes to a $(mk,\epsilon^m)$ box of $T^{(m)}$. A little thought shows that this does give a consistent chequerboard shading as needed.
For a detailed definition of `operation on tangles', see \cite{KdySnd2004}.
This gives an operation on planar algebras $P \mapsto  {}^{(m)}\!P$. Here, the planar algebra ${}^{(m)}\!P$ = $Q$, say, is defined by setting the vector spaces $Q_{k,\pm}$ to be $P_{mk,(\pm)^m}$ and the action $Z_T^{Q}$ of a tangle $T$ on $Q$ to be $Z_{T^{(m)}}^P$.

In this paper we will only be interested in the $2$-cabling of a planar algebra $P$ whose spaces are specified by $(^{(2)}\!P)_{k,\pm} = P_{2k,+}$
for any $k \in \{0,1,2,\cdots\}$.

\subsection{The Drinfeld double} For a finite-dimensional Hopf algebra $H$, its Drinfeld double, denoted $D(H)$, is the Hopf algebra with underlying vector space $H^* \otimes H$ and multiplication, comultiplication and antipode specified by the following formulae:
\begin{eqnarray*}
(f \otimes x)(g \otimes y) &=& g_1(Sx_1)g_3(x_3) (g_2f \otimes x_2y),\\
\Delta(f \otimes x) &=& (f_1 \otimes x_1) \otimes (f_2 \otimes x_2), {\mbox { and}}\\
S(f \otimes x) &=& f_1(x_1)f_3(Sx_3) (S^{-1}f_2 \otimes Sx_2).
\end{eqnarray*}

As in \cite{DeKdy2015} what we will actually use is an isomorphic version of $D(H)$ which we denote $\tilde{D}(H)$ which also has underlying vector space $H^* \otimes H$ and structure maps obtained by transporting the structures on $D(H)$ using the invertible map $S \otimes S^{-1} : D(H) = H^* \otimes H \rightarrow H^* \otimes H = \tilde{D}(H)$. It is easily checked that the structure maps for $\tilde{D}(H)$ are given by the following formulae.
\begin{eqnarray*}
(f \otimes x)(g \otimes y) &=& g_1(x_1)g_3(Sx_3) (fg_2 \otimes yx_2),\\
\Delta(f \otimes x) &=& (f_2 \otimes x_2) \otimes (f_1 \otimes x_1), {\mbox { and}}\\
S(f \otimes x) &=& f_1(Sx_1)f_3(x_3) (S^{-1}f_2 \otimes Sx_2).
\end{eqnarray*}

The unit and counit of $\tilde{D}(H)$ are given by $\epsilon \otimes 1$ and
$1 \otimes \epsilon$ respectively.
If $H$ is semisimple and cosemisimple, 
there are normalised two-sided integrals
$p \otimes h \in \tilde{D}(H)$ and $h \otimes p \in \tilde{D}(H)^*$, and
in particular,  $D(H)$ is both semisimple and cosemisimple. Also, in this case, since the antipode $S$ is involutive, there is no distinction between $S$ and $S^{-1}$.

\section{The planar algebra morphism}

The following proposition is the first part of the main result of this paper.

\begin{proposition}\label{main}  Let $H$ be a finite-dimensional, semisimple and cosemisimple Hopf algebra over $\k$ of dimension $n = \delta^2$
with Drinfeld double $\tilde{D}(H)$. The map
$$\tilde{D}(H) \cong P(\tilde{D}(H))_{2,+}\  \longrightarrow\  {}^{(2)}\!P(H^*)_{2,+} = P(H^*)_{4,+} \cong H^* \rtimes H \rtimes H^*$$
defined by linear extension of $f \otimes a \mapsto f_1(Sa_1) f_3 \rtimes Sa_2 \rtimes f_2$
extends to a unique planar algebra morphism from $P(\tilde{D}(H))$
to $^{(2)}\!P(H^*)$.
\end{proposition}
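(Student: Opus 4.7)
The plan is to invoke the presentation of $P(\tilde{D}(H))$ as the quotient of the universal planar algebra on the label set $L_{2,+} = \tilde{D}(H)$ by the relations (L), (M), (U), (I), (C), (T), (E), (A) of Figures \ref{fig:LnrMdl}--\ref{fig:XchNtp}. By the universal property, the prescribed linear map $\phi_0 : \tilde{D}(H) \to {}^{(2)}\!P(H^*)_{2,+} = P(H^*)_{4,+}$ sending $f \otimes a$ to $f_1(Sa_1) f_3 \rtimes Sa_2 \rtimes f_2$ extends uniquely to a planar algebra morphism from the universal planar algebra into ${}^{(2)}\!P(H^*)$, and this descends to $P(\tilde{D}(H))$ precisely when the eight defining relations hold in ${}^{(2)}\!P(H^*)$ with their $\tilde{D}(H)$-labels replaced by $\phi_0$-images. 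Uniqueness of the resulting morphism is then automatic since $P(\tilde{D}(H))$ is generated as a planar algebra by its $(2,+)$-box. The remaining task is to verify the eight identities, where each tangle is first $2$-cabled and then interpreted inside $P(H^*)$.

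Three of the verifications are essentially immediate: (L) holds since $\phi_0$ is linear; (M) holds because both planar algebras have modulus $n = \delta^2$ (a single closed loop in the $2$-cabling of $P(H^*)$ corresponds to two loops in $P(H^*)$, contributing $\delta \cdot \delta$); and (U) is the one-line check that $\phi_0(\epsilon \otimes 1) = \epsilon \rtimes 1 \rtimes \epsilon$ is the unit of $P(H^*)_{4,+}$, using $\Delta_{H^*}(\epsilon) = \epsilon \otimes \epsilon$ and $S(1) = 1$.

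The relations (C), (I), (T) and (A) are Sweedler-notation exercises. For each, one cables the relevant tangle, substitutes the formula for $\phi_0$, and repeatedly applies the corresponding relation in $P(H^*)$ together with the integral identities $hx = \epsilon(x)h$, $qp = q(1)p$, and $p(h) = 1/n$ from Section~2.1, exploiting involutivity of $S$. For example, (C) amounts to showing that surrounding $\phi_0(f \otimes a)$ by $H^*$-integral caps in the cabled picture collapses via the $P(H^*)$-versions of (C) and (I) to the scalar $f(1)\epsilon(a)$, which is indeed the counit of $\tilde{D}(H)$ at $f \otimes a$. Relation (A) is similarly a bookkeeping verification that the cabled rotation tangle applied to $\phi_0(f \otimes a)$ equals $\phi_0(S(f \otimes a)) = \phi_0\bigl(f_1(Sa_1) f_3(a_3)(S f_2 \otimes S a_2)\bigr)$.

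The main obstacle is the Exchange relation (E), which encodes multiplicativity. Unpacked, one must verify
\[
\phi_0\bigl((f \otimes x)(g \otimes y)\bigr) \;=\; \phi_0(f \otimes x)\cdot \phi_0(g \otimes y)
\]
in $H^* \rtimes H \rtimes H^*$. The left side expands via the $\tilde{D}(H)$-multiplication $(f \otimes x)(g \otimes y) = g_1(x_1)\, g_3(Sx_3)\,(fg_2 \otimes yx_2)$, while the right side is computed by applying the iterated crossed-product rules with respect to the actions $f.a = f(a_2)a_1$ and $a.f = f_2(a)f_1$ across the three factors $H^*$, $H$, $H^*$. Coassociativity, the antipode axiom $x_1(Sx_2) = \epsilon(x)$, and involutivity of $S$ should collapse the two expressions to a common Sweedler form. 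This is the ``Sweedler calisthenics'' promised in the introduction and constitutes the bulk of the technical work.
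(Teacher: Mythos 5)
Your overall strategy is exactly the paper's: use the presentation of $P(\tilde{D}(H))$ by the generator set $L_{2,+} = \tilde{D}(H)$ and the eight relations, send the generator to the prescribed element of $P(H^*)_{4,+}$, and check that each ($2$-cabled) relation dies in ${}^{(2)}\!P(H^*)$; uniqueness is indeed automatic. Your handling of (L), (M), (U) matches the paper (for (M) note that you are implicitly making the choice $n$, rather than $-n$, of square root of $\dim\tilde{D}(H)=n^2$, as the paper does explicitly), and your direct Sweedler verification of (A) is a legitimate, if more laborious, alternative to the paper's trick of using the already-established multiplicativity to reduce (A) to the algebra generators $f\otimes 1$ and $\epsilon\otimes a$.

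There is, however, a genuine gap in your treatment of the Exchange relation. You unpack (E) as the single statement that $\phi_0$ is multiplicative, i.e.\ $\phi_0\bigl((f\otimes x)(g\otimes y)\bigr)=\phi_0(f\otimes x)\cdot\phi_0(g\otimes y)$ in $H^*\rtimes H\rtimes H^*$. But the exchange relation of Figure \ref{fig:XchNtp} involves the expression $a_1\otimes a_2b$ and is equivalent to \emph{two} independent conditions (Figure \ref{fig:multcomult}): compatibility with the multiplication tangle \emph{and} compatibility with the comultiplication tangle. Multiplicativity alone does not imply the comultiplication half --- a linear map on the $2$-box space can be an algebra homomorphism without intertwining the comultiplication tangles, and it is precisely the comultiplication half that forces the specific ``twisted'' form $f\otimes a\mapsto f_1(Sa_1)\,f_3\rtimes Sa_2\rtimes f_2$ rather than an arbitrary algebra embedding of $\tilde{D}(H)$ into $H^*\rtimes H\rtimes H^*$. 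The paper verifies this second condition separately (the identity of Figure \ref{fig:comultcheck}, reduced to Figure \ref{fig:comultcheck2}), and your proof is incomplete without an analogous verification. The rest of the argument stands, but as written (E) is only half checked.
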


Before beginning the proof, we clarify, in the following lemma whose proof we omit, the isomorphisms occurring in the
statement of the proposition.
When $H$ is a Kac algebra and for the $P_{k,+}$, the proof, in detail, appears in the thesis \cite{Jjo2008}. The proof in our case is identical and proceeds by induction on $k$. The statement uses the Fourier transform map $F_H: H \rightarrow H^*$ defined by $F_H(a) = \delta p_1(a) p_2$ which satisfies $F_{H^*}F_H = S$. We will usually omit the subscript of $F_H$ and $F_{H^*}$ and write both as $F$ with the argument making it clear which is meant.

\begin{lemma}\label{lemma:isom} Let $H$ be a finite-dimensional, semisimple and cosemisimple Hopf algebra over $\k$ with planar algebra $P(H)$.
For each $k \geq 2$ the maps
\begin{eqnarray*}
H \rtimes H^* \rtimes H \rtimes \cdots (k-1 {\rm ~factors})&\longrightarrow& P(H)_{k,+} {\rm ~and}\\
H^* \rtimes H \rtimes H^* \rtimes \cdots (k-1 {\rm ~factors})&\longrightarrow& P(H)_{k,-},
\end{eqnarray*}
defined as in Figure \ref{fig:isom} are algebra isomorphisms.\qed
\begin{figure}[!h]
\psfrag{a}{\huge $a$}
\psfrag{Ff}{\huge $Ff$}
\psfrag{Fg}{\huge $Fg$}
\psfrag{b}{\huge $b$}
\psfrag{afb}{\huge $a \rtimes f \rtimes b \rtimes \cdots ~~ \mapsto$}
\psfrag{fag}{\huge $f \rtimes a \rtimes g \rtimes \cdots ~~ \mapsto$}
\psfrag{ddots}{\huge $\cdots$}
\begin{center}
\resizebox{13cm}{!}{\includegraphics{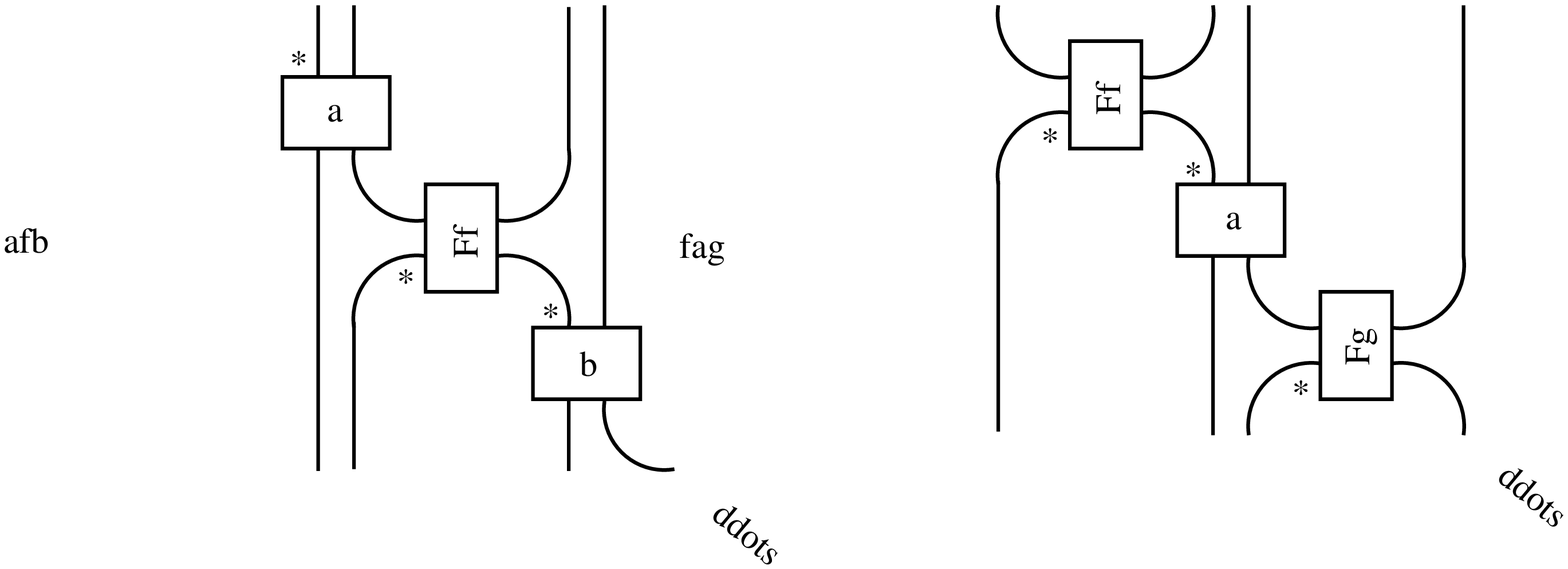}}
\end{center}
\caption{Algebra isomorphisms}
\label{fig:isom}
\end{figure}
\end{lemma}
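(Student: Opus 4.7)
The plan is to prove Lemma \ref{lemma:isom} by induction on $k$, exactly along the lines indicated in the excerpt. The base case is $k=2$, where the statement reduces to: $H \to P(H)_{2,+}$, $a \mapsto [a]$ (the tangle with one internal $(2,+)$-box labelled $a$), and $H^* \to P(H)_{2,-}$, $f \mapsto [Ff]$ suitably rotated to live in $P(H)_{2,-}$. That these maps are \emph{algebra} maps is precisely what the exchange relation (E) in Figure \ref{fig:XchNtp} asserts: vertical stacking of two labelled $2$-boxes reduces via (E) to a single labelled $2$-box whose label is the product. Linearity together with the $H$-module structure of the label vector space gives a linear map, and non-degeneracy of the planar algebra (via the trace (T) and antipode (A) relations) gives injectivity; the dimension equality $\dim P(H)_{k,\pm} = n^{k-1}$ stated at the end of the previous subsection closes the base case.

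For the inductive step, assume the statement for $k$ and prove it for $k+1$, treating the $(k+1,+)$ case (the $(k+1,-)$ case being parallel). The diagrammatic definition in Figure \ref{fig:isom} is designed so that the image of $a_1 \rtimes f_1 \rtimes a_2 \rtimes \cdots \rtimes \xi_{k-1} \rtimes \xi_k$ (with $\xi_k$ the last factor, in $H$ or $H^*$ as parity dictates) is the planar product of the image of $a_1 \rtimes \cdots \rtimes \xi_{k-1}$, viewed in $P(H)_{k+1,+}$ via the inclusion tangle (cap on the right), with a single extra labelled $2$-box placed at the rightmost position carrying label $\xi_k$ (or $F\xi_k$). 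Thus I would first check, as a geometric/compatibility lemma, that the map of Figure \ref{fig:isom} at level $k+1$ is the composition of the inductive embedding at level $k$ (followed by the standard inclusion of $P(H)_{k,-} \hookrightarrow P(H)_{k+1,+}$) with right multiplication by this boundary label.

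The key computational step is then to verify that this right-multiplication matches the crossed-product structure. Concretely, one must show that for $\alpha \in P(H)_{k,-}$ viewed inside $P(H)_{k+1,+}$, the planar product with a rightmost labelled box realises the crossed product action $f.a = f(a_2)a_1$, respectively $a.f = f_2(a)f_1$. This is a direct local calculation: it involves sliding the rightmost $2$-box past the nearest neighbouring $2$-box using exactly one application of (E), together with the defining identity $F_{H^*}F_H = S$ and the integral/trace/counit relations (I), (T), (C) to normalise caps and cups. Once this local multiplication lemma is in hand, the homomorphism property at level $k+1$ follows by writing an arbitrary product as an iterated application of this single-label-at-a-time move. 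This is the main obstacle, since it is here that the Sweedler-notation identities for the crossed product actions must be reproduced exactly from diagrammatic manipulations; however, it is a finite check essentially identical to the Kac case in \cite{Jjo2008}.

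Finally, to upgrade the algebra homomorphism to an isomorphism, I would invoke the dimension identity $\dim P(H)_{k+1,\pm} = n^k = \dim(H \rtimes H^* \rtimes \cdots)$ ($k$ factors), so it suffices to show injectivity. Injectivity is inherited from the inductive step together with the fact that the image at level $k+1$ is an $H$- or $H^*$-module free extension of the image at level $k$, where freeness is guaranteed by the non-degenerate trace on $P(H)_{k+1,\pm}$ together with the Markov property of the conditional expectation $P(H)_{k+1,\pm} \to P(H)_{k,\mp}$ coming from the partial trace tangle. This completes the induction and the proof.
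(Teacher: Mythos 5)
Your proposal follows exactly the route the paper indicates for this lemma: the authors omit the proof, saying only that it proceeds by induction on $k$ and is identical to the Kac-algebra case worked out in detail in \cite{Jjo2008}, and that is precisely the induction you outline (base case from the exchange relation plus the dimension count $\dim P(H)_{k,\pm}=n^{k-1}$, inductive step by adjoining one labelled $2$-box and matching the crossed-product actions $f.a=f(a_2)a_1$ and $a.f=f_2(a)f_1$ diagrammatically). The only quibble is a bookkeeping slip in your inductive step --- if the new factor is appended at the rightmost position as in Figure \ref{fig:isom}, the relevant inclusion and conditional expectation should preserve the shading, i.e.\ $P(H)_{k,+}\hookrightarrow P(H)_{k+1,+}$ and $P(H)_{k+1,\pm}\to P(H)_{k,\pm}$, rather than flipping the sign --- but this does not affect the substance of the argument.
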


The idea of the proof of Proposition \ref{main} is very simple. Since we know a presentation of the planar algebra of $\tilde{D}(H)$ by generators and relations, in order to define a planar algebra map of
$P(\tilde{D}(H))$ into any planar algebra, it suffices to map the generators to suitable elements in the target planar algebra in such a way that the relations hold. 

\begin{proof}[Proof of Proposition \ref{main}] Throughout this proof, we will use
$P$ to denote the planar algebra $P(\tilde{D}(H))$.

The map defined in the statement of Proposition \ref{main} can also be
expressed as $f \otimes a \mapsto (f_2 \rtimes 1 \rtimes f_1)(\epsilon \rtimes S(a) \rtimes \epsilon)$, as a brief calculation shows - see the  pictorial  rule for multiplication in iterated cross products in \cite{DeKdy2015}. This map is
shown pictorially in Figure \ref{fig:mapping0}.
Being bilinear in $f$ and $a$, this map
\begin{figure}[!h]
\psfrag{c}{\huge $ $}
\psfrag{a}{\huge $f_2$}
\psfrag{Ff}{\huge $FSa$}
\psfrag{Fg}{\huge $F$}
\psfrag{b}{\huge $f_1$}
\psfrag{afb}{\huge $f \otimes a \ \ \ \ \ \  \mapsto$}
\psfrag{fag}{\huge $f \rtimes a \rtimes g \rtimes \cdots ~~ \mapsto$}
\psfrag{ddots}{\huge $\cdots$}
\begin{center}
\resizebox{6cm}{!}{\includegraphics{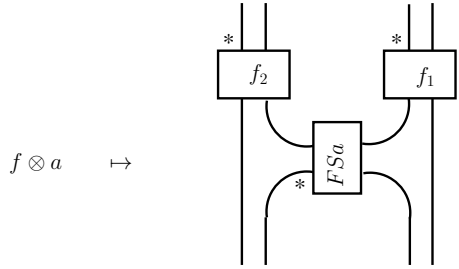}}
\end{center}
\caption{Mapping $\tilde{D}(H)$ to $P_{4,+}(H^*)$}
\label{fig:mapping0}
\end{figure}
clearly admits a linear extension to a map $\tilde{D}(H) \longrightarrow {}P(H^*)_{4,+} = {}^{(2)}\!P(H^*)_{2,+}$. Consider its extension to a planar algebra map from the universal planar algebra on $L = L_{2,+} = \tilde{D}(H)$ to ${}^{(2)}\!P(H^*)$. We will now check that each of the 8 relations (L), (M), (U), (I), (C), (T), (E), (A) in Figures \ref{fig:LnrMdl} - \ref{fig:XchNtp} (applied to the Hopf algebra $\tilde{D}(H)$) is in the kernel of this planar algebra map.\\
{\em\bf Relation L}: This is a direct consequence of the linearity of the map
$\tilde{D}(H) \longrightarrow {}^{(2)}\!P(H^*)_{2,+} = P(H^*)_{4,+}$ together with the multilinearity of tangle maps.\\
{\em\bf Relations M}: The modulus relations for $P= P(\tilde{D}(H))$ depend on a choice of square root of $dim(\tilde{D}(H)) = n^2$ and we will choose $n$ to be the modulus of $P$. Thus the modulus relations for
$P$ assert that $Z_{T^{0,\pm}}^P(1) = n 1_{0,\pm}$ where $T^{0,\pm}$ are the $(0,\pm)$ tangles with just one internal closed loop
and no internal discs and $1_{0,\pm}$ are the unit elements of $P_{0,\pm}$.
Pushing this down to ${}^{(2)}\!P(H^*)$, what needs to be verified is that, $Z_{T^{0,\pm}}^{{}^{(2)}\!P(H^*)}(1) = n 1_{0,\pm}$ or equivalently that
$Z_{(T^{0,\pm})^{(2)}}^{P(H^*)}(1) = n 1_{0,+}$.

Since the 2-cabled tangle $(T^{0,\pm})^{(2)}$ is just the $(0,+)$ tangle with
two parallel internal closed loops (and no internal discs), the asserted equality is a consequence of (one application of each of) the two modulus relations for $P(H^*)$.
\\
{\em\bf Relation U}: This is the equality $Z_{I^{2,+}_{2,+}}^P(1_{\tilde{D}(H)}) = Z_{U^{2,+}}^P(1)$, where
$I^{2,+}_{2,+}$ and $U^{2,+}$ are the identity and unit tangles of colour
$(2,+)$. In order to push this down to ${}^{(2)}\!P(H^*)$, we note first that
$1_{\tilde{D}(H)} = \epsilon \otimes 1$ ($= \epsilon \otimes 1_H$) and that under the map of Figure \ref{fig:mapping0} it goes to $1_{4,+}$ - the unit element of $P_{4,+}(H^*)$. This is because $FS(1) = F(1) = \delta p$ and by use of the integral relation in $P(H^*)$.

Thus what needs to be verified is that $Z_{I^{2,+}_{2,+}}^{{}^{(2)}\!P(H^*)}(1_{4,+}) = Z_{U^{2,+}}^{{}^{(2)}\!P(H^*)}(1)$ or equivalently that $Z_{(I^{2,+}_{2,+})^{(2)}}^{P(H^*)}(1_{4,+}) = Z_{(U^{2,+})^{(2)}}^{P(H^*)}(1)$. The last equality holds since $(I^{2,+}_{2,+})^{(2)} = I^{4,+}_{4,+}$,
$(U^{2,+})^{(2)} = U^{4,+}$ and, by definition, $1_{4,+} = Z_{U^{4,+}}(1)$.
\\
{\em\bf Relation I}: This is the equality $Z_{I^{2,+}_{2,+}}^P(h_{\tilde{D}(H)}) = n^{-1} Z_{E^{2,+}}^P(1)$, where
$h_{\tilde{D}(H)}$ is the integral in $\tilde{D}(H)$ and $E^{2,+}$ is the Jones projection tangle of colour
$(2,+)$. To push this down to ${}^{(2)}\!P(H^*)$, recall first that $h_{\tilde{D}(H)} =
p \otimes h$. Under the map of Figure \ref{fig:mapping0} this goes to the element of $P_{4,+}(H^*)$ shown on the left in Figure \ref{fig:mapping2},
\begin{figure}[!h]
\psfrag{c}{\huge $ $}
\psfrag{a}{\huge $p_2$}
\psfrag{Ff}{\huge $FSh$}
\psfrag{Fg}{\huge $F$}
\psfrag{b}{\huge $p_1$}
\psfrag{n-1}{\huge $n^{-1}$}
\psfrag{afb}{\huge $f \otimes a \ \ \ \ \ \  \mapsto$}
\psfrag{fag}{\huge $f \rtimes a \rtimes g \rtimes \cdots ~~ \mapsto$}
\psfrag{ddots}{\huge $\cdots$}
\begin{center}
\resizebox{8cm}{!}{\includegraphics{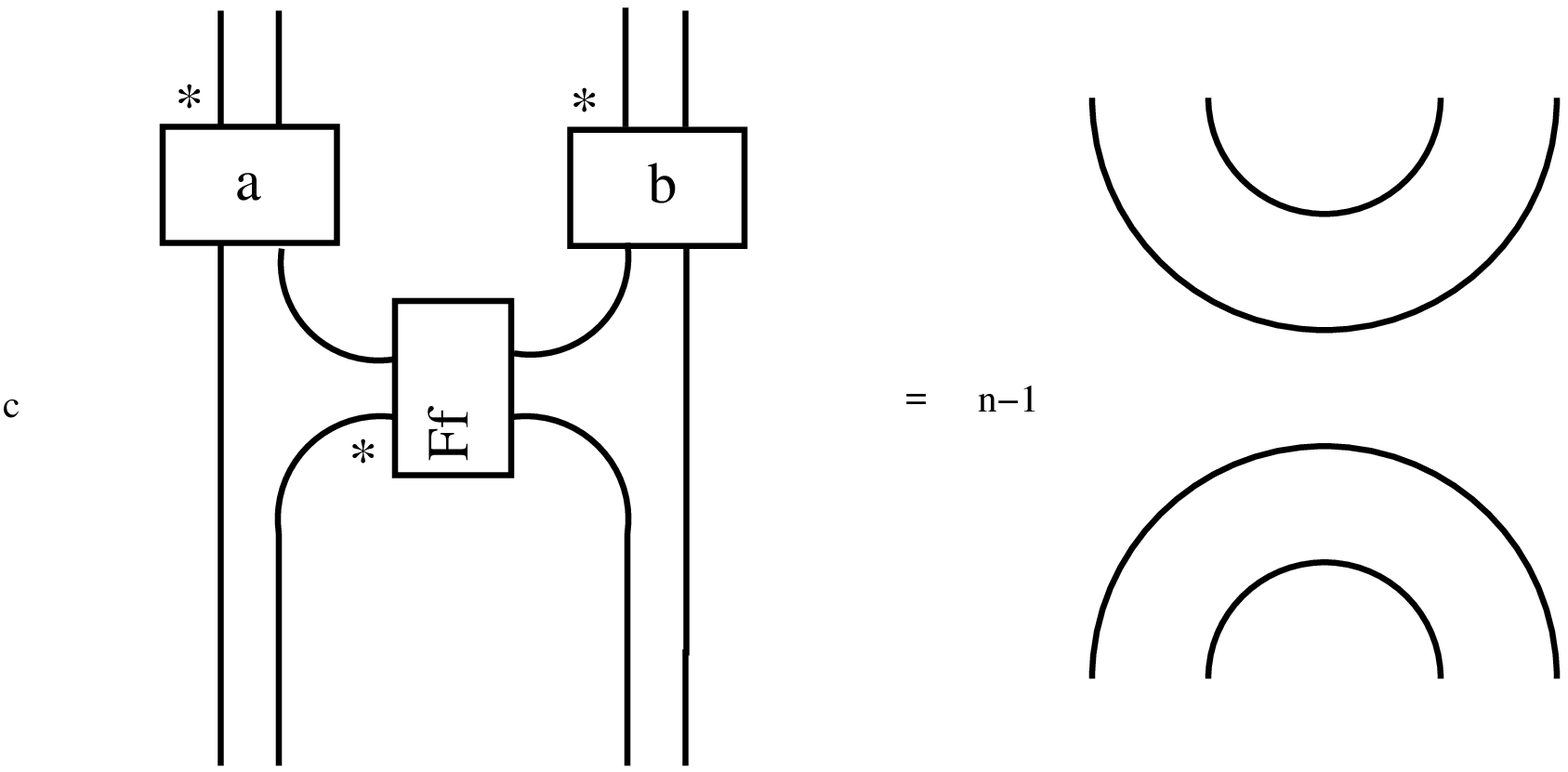}}
\end{center}
\caption{Equality to be verified in $P_{4,+}(H^*)$}
\label{fig:mapping2}
\end{figure}
which needs to be shown to be equal to $n^{-1} Z_{E^{2,+}}^{{}^{(2)}\!P(H^*)}(1) =
n^{-1} Z_{(E^{2,+})^{(2)}}^{P(H^*)}(1)$, which is the element of $P_{4,+}(H^*)$ shown on the right in Figure \ref{fig:mapping2}.

We prove this as follows. First note that $FS(h) = F(h) = \delta p_1(h)p_2 =
\delta p(h) \epsilon = \delta^{-1} \epsilon$. Now applying the unit relation in $P(H^*)$ we reduce the element on the left side of Figure \ref{fig:mapping2} to that on the left side in Figure \ref{fig:mapping3}.

Then we calculate in $P(H^*)$ as in Figure \ref{fig:mapping3}, where
\begin{figure}[!h]
\psfrag{c}{\huge $\delta^{-1}$}
\psfrag{a}{\huge $p_3$}
\psfrag{Ff}{\huge $p_1$}
\psfrag{p}{\huge $p$}
\psfrag{b}{\huge $p_2$}
\psfrag{n-1}{\huge $\delta^{-1}$}
\psfrag{n-2}{\huge $\delta^{-2}$}
\psfrag{afb}{\huge $f \otimes a \ \ \ \ \ \  \mapsto$}
\psfrag{fag}{\huge $f \rtimes a \rtimes g \rtimes \cdots ~~ \mapsto$}
\psfrag{ddots}{\huge $\cdots$}
\begin{center}
\resizebox{9cm}{!}{\includegraphics{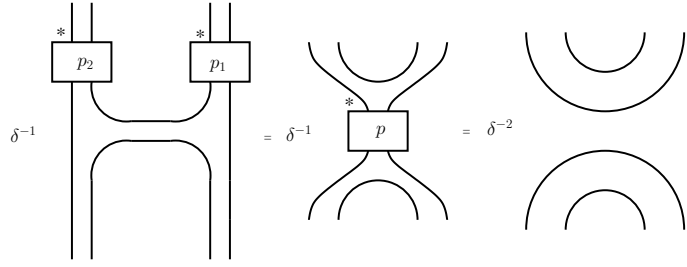}}
\end{center}
\caption{Computation in $P(H^*)$}
\label{fig:mapping3}
\end{figure}
the first equality follows from the exchange and antipode relations in $P(H^*)$ together with the fact that $Sp=p$ and the Hopf algebra identity $p_1 \otimes p_2 = p_2 \otimes p_1$ (which essentially expresses the traciality of $p$), while the second equality follows from the integral relation. Now, comparison with the previous step immediately yields the equality expressed in Figure \ref{fig:mapping2}, thus verifying Relation (I).\\
{\em\bf Relation C}: Recalling that the counit of $\tilde{D}(H)$ is given by $1 \otimes \epsilon$, the verification that Relation C is in the kernel of the planar algebra map from $P$ to ${}^{(2)}\!P(H^*)$ is easily seen to be equivalent to the truth of the equation of Figure \ref{fig:mapping4} holding in $P(H^*)$.

\begin{figure}[!h]
\psfrag{a}{\huge $f_2$}
\psfrag{Ff}{\Large $FS(a)$}
\psfrag{p}{\huge $p$}
\psfrag{b}{\huge $f_1$}
\psfrag{n-1}{\huge $f(1)\epsilon(a)$}
\psfrag{n-2}{\huge $\delta^{-2}$}
\psfrag{afb}{\huge $f \otimes a \ \ \ \ \ \  \mapsto$}
\psfrag{fag}{\huge $f \rtimes a \rtimes g \rtimes \cdots ~~ \mapsto$}
\psfrag{ddots}{\huge $\cdots$}
\begin{center}
\resizebox{7cm}{!}{\includegraphics{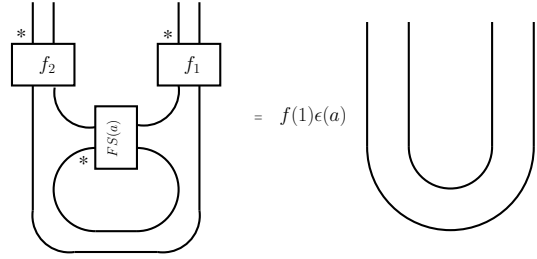}}
\end{center}
\caption{Relation C}
\label{fig:mapping4}
\end{figure}
This is easily seen since the trace and antipode relations in $P(H^*)$ simplify the looped $FS(a)$ to $\delta F(a)(h) =\delta^2 p(Sah) = \epsilon(a)$. Now $f_2Sf_1 = f(1) \epsilon$, so the required equality follows using Relation $U$.\\
{\em\bf Relation T}: Recalling that $p_{\tilde{D}(H)} = h \otimes p$, the verification that Relation T is in the kernel of the planar algebra map from $P$ to ${}^{(2)}\!P(H^*)$ is easily seen to be equivalent to the truth of the equation of Figure \ref{fig:mapping5} holding in $P(H^*)$.
\begin{figure}[!h]
\psfrag{a}{\huge $f_2$}
\psfrag{Ff}{\Large $FS(a)$}
\psfrag{p}{\huge $p$}
\psfrag{b}{\huge $f_1$}
\psfrag{n-1}{\huge $n f(h)p(a)$}
\psfrag{n-2}{\huge $\delta^{-2}$}
\psfrag{afb}{\huge $f \otimes a \ \ \ \ \ \  \mapsto$}
\psfrag{fag}{\huge $f \rtimes a \rtimes g \rtimes \cdots ~~ \mapsto$}
\psfrag{ddots}{\huge $\cdots$}
\begin{center}
\resizebox{6cm}{!}{\includegraphics{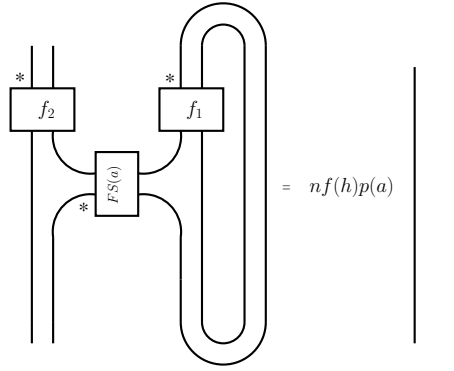}}
\end{center}
\caption{Relation T}
\label{fig:mapping5}
\end{figure}
The left hand side of Figure \ref{fig:mapping5} simplifies, using the trace and counit relations in $P(H^*)$, to $\delta f_1(h) FS(a)(1) f_2 = \delta^2 p(a) f_1(h)f_2 = np(a)f(h)\epsilon$, as needed.
\\
{\em\bf Relation E}: This is equivalent to two relations - one for multiplication and the other for comultiplication. These are shown in Figure \ref{fig:multcomult}.
\begin{figure}[!h]
\psfrag{a}{\huge $a$}
\psfrag{ab}{\huge $ab$}
\psfrag{p}{\huge $p$}
\psfrag{b}{\huge $b$}
\psfrag{a1}{\huge $a_1$}
\psfrag{a2}{\huge $a_2$}
\psfrag{afb}{\huge $f \otimes a \ \ \ \ \ \  \mapsto$}
\psfrag{fag}{\huge $f \rtimes a \rtimes g \rtimes \cdots ~~ \mapsto$}
\psfrag{ddots}{\huge $\cdots$}
\begin{center}
\resizebox{10cm}{!}{\includegraphics{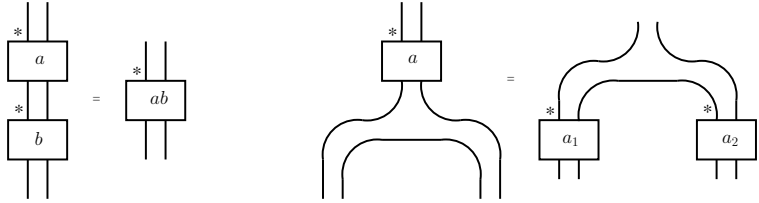}}
\end{center}
\caption{Multiplication and comultiplication relations}
\label{fig:multcomult}
\end{figure}
To prove that the multiplication relation is in the kernel, a little thought shows that it suffices to verify
the equality of Figure \ref{fig:multcheck} in $P(H^*)$.
\begin{figure}[!h]
\psfrag{f2}{\huge $f_2$}
\psfrag{f1}{\huge $f_1$}
\psfrag{fsa}{\huge $FSa$}
\psfrag{f3}{\huge $f_3$}
\psfrag{fsa2}{\huge $FSa_2$}
\psfrag{a2}{\huge $a_2$}
\psfrag{long}{\huge $=f_1(a_1)f_4(Sa_3)$}
\psfrag{fag}{\huge $f \rtimes a \rtimes g \rtimes \cdots ~~ \mapsto$}
\psfrag{ddots}{\huge $\cdots$}
\begin{center}
\resizebox{8cm}{!}{\includegraphics{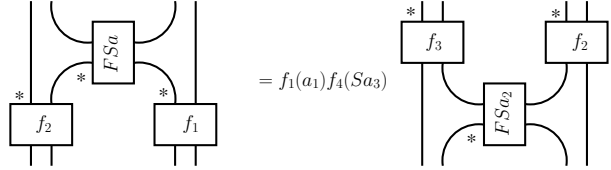}}
\end{center}
\caption{Equality to be verified for the multiplication relation}
\label{fig:multcheck}
\end{figure}
Using the exchange relation in $P(H^*)$ twice,  the equality in Figure \ref{fig:multcheck} is equivalent to the Hopf algebraic identity:
$$
f_1 \otimes f_2 \otimes FS(a)  = f_1(a_1)f_6(Sa_3) f_3 \otimes f_4 \otimes f_5SF(a_2)Sf_2.
$$
To see this, it certainly suffices to see that
$$
f \otimes FS(a)  = f_1(a_1)f_5(Sa_3) f_3 \otimes f_4SF(a_2)Sf_2.
$$
Evaluating both sides on an arbitrary element $x \otimes y \in H \otimes H$, we need to verify the equality
$$
f(x) FS(a)(y) = f_1(a_1)f_5(Sa_3) f_3(x) (f_4SF(a_2)Sf_2)(y).
$$
The right hand side of the above equation may be written as:
\begin{eqnarray*}
RHS &=& f_1(a_1)f_5(Sa_3) f_3(x)f_4(y_1)SF(a_2)(y_2)Sf_2(y_3)\\
&=& \delta f_1(a_1)f_5(Sa_3) f_3(x)f_4(y_1)p_1(Sa_2)p_2(y_2)Sf_2(y_3)\\
&=& \delta f_1(a_1)f_5(Sa_3) f_3(x)(f_4p_2Sf_2)(y)p_1(Sa_2)\\
&=& \delta f_1(a_1)f_5(Sa_3) f_3(x)p_2(y)(Sf_4p_1f_2)(Sa_2)\\
&=& \delta f_1(a_1)f_5(Sa_5) f_3(x)p_2(y)Sf_4(Sa_4)p_1(Sa_3)f_2(Sa_2)\\
&=& \delta f(a_1Sa_2xa_4Sa_5) p_2(y)p_1(Sa_3)\\
&=& \delta f(x) p(Say),
\end{eqnarray*}
which clearly agrees with the left hand side, finishing the proof of the multiplication relation.

Checking that the comultiplication relation is in the kernel is seen to be
equivalent to the identity of Figure \ref{fig:comultcheck} holding in $P(H^*)$.
\begin{figure}[!h]
\psfrag{f2}{\huge $f_2$}
\psfrag{f1}{\huge $f_1$}
\psfrag{fsa}{\huge $FSa$}
\psfrag{fsa1}{\huge $FSa_2$}
\psfrag{f3}{\huge $f_3$}
\psfrag{fsa2}{\huge $FSa_1$}
\psfrag{f4}{\huge $f_4$}
\psfrag{long}{\huge $=f_1(a_1)f_4(Sa_3)$}
\psfrag{fag}{\huge $f \rtimes a \rtimes g \rtimes \cdots ~~ \mapsto$}
\psfrag{ddots}{\huge $\cdots$}
\begin{center}
\resizebox{10cm}{!}{\includegraphics{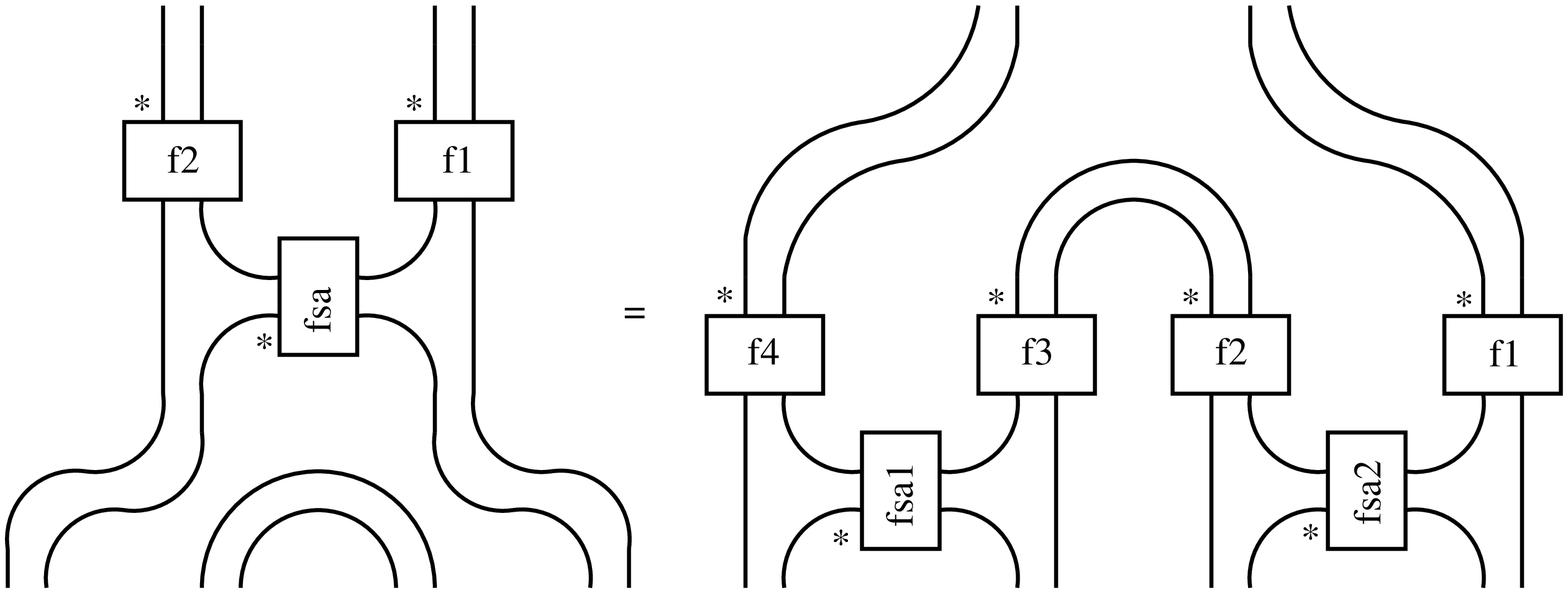}}
\end{center}
\caption{Equality to be verified for the comultiplication relation}
\label{fig:comultcheck}
\end{figure}
This easily reduces to verifying the relation of Figure \ref{fig:comultcheck2}.
We leave this pleasant verification to the reader.
\begin{figure}[!h]
\psfrag{f2}{\huge $f_2$}
\psfrag{f1}{\huge $f_1$}
\psfrag{fa}{\huge $Fa$}
\psfrag{Fa1}{\huge $Fa_2$}
\psfrag{f3}{\huge $f_3$}
\psfrag{Fa2}{\huge $Fa_1$}
\psfrag{f4}{\huge $f_4$}
\psfrag{long}{\huge $=f_1(a_1)f_4(Sa_3)$}
\psfrag{fag}{\huge $f \rtimes a \rtimes g \rtimes \cdots ~~ \mapsto$}
\psfrag{ddots}{\huge $\cdots$}
\psfrag{=}{\huge $=$}
\begin{center}
\resizebox{3.5cm}{!}{\includegraphics{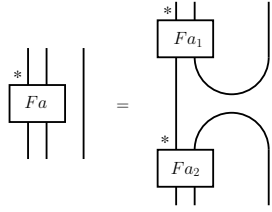}}
\end{center}
\caption{Equivalent equality to be verified}
\label{fig:comultcheck2}
\end{figure}
\\
{\em\bf Relation A}: The easiest way to see that the antipode relation is in the kernel of the planar algebra map from $P$ to ${}^{(2)}\!P(H^*)$ is to appeal
to the already proved multiplication relation. Since $S$ is an anti-homomorphism as is the 2-rotation, compatibility with multiplication immediately reduces to checking the antipode relation on an algebra generating set of $\tilde{D}(H)$. These may be chosen to be $f\otimes1$ and
$a \otimes \epsilon$ and on elements of this kind the antipode relation is
trivial to verify.
\end{proof}

\section{Injectivity}

In this section, we establish the injectivity of the planar algebra map obtained in the previous section.

\begin{proposition}\label{injective}
The planar algebra morphism $P(\tilde{D}(H))$ (=$P$) 
to $^{(2)}\!P(H^*)$ defined in Proposition \ref{main} is injective.
\end{proposition}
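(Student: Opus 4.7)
The plan is to exploit that any morphism of planar algebras automatically transports the multiplication and trace tangles, and to combine this with the non-degeneracy of the trace on $P:=P(\tilde D(H))$ to force the kernel to be zero.

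Writing $\Phi: P \to Q := {}^{(2)}\!P(H^*)$ for the morphism produced in Proposition \ref{main} and fixing any colour $(k,\epsilon)$, I would first observe that both planar algebras are connected, so $P_{0,\pm} = \k = Q_{0,\pm}$, and since $\Phi$ preserves the unit tangle the maps $\Phi_{0,\pm}: \k \to \k$ are the identity. Compatibility of $\Phi$ with the multiplication tangle and the trace tangle at colour $(k,\epsilon)$ therefore yields, for all $x,y \in P_{k,\epsilon}$,
\[
\Phi(xy) = \Phi(x)\Phi(y), \qquad \operatorname{tr}^P(x) = \operatorname{tr}^Q(\Phi(x)) \in \k.
\]

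Suppose then that $x \in P_{k,\epsilon}$ satisfies $\Phi(x) = 0$. For every $y \in P_{k,\epsilon}$ the two displayed identities give
\[
\operatorname{tr}^P(xy) = \operatorname{tr}^Q(\Phi(x)\Phi(y)) = 0.
\]
The main result of \cite{KdySnd2006}, applied to $\tilde D(H)$ (which is semisimple and cosemisimple, as recalled in Section 2.5), asserts that $P$ is non-degenerate, i.e., the bilinear form $(x,y)\mapsto \operatorname{tr}^P(xy)$ on each $P_{k,\epsilon}$ is non-degenerate. Hence $x=0$, and $\Phi$ is injective at every level.

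The main obstacle is essentially nothing: once one recognises that planar algebra morphisms must intertwine the trace tangle and $\Phi_{0,\pm}$ is the identity on $\k$, injectivity reduces to non-degeneracy of $P$, which is already on the table. An alternative, more laborious, route would be to invert the $(2,+)$-level map $f \otimes a \mapsto f_1(Sa_1)f_3 \rtimes Sa_2 \rtimes f_2$ by direct computation and then propagate to higher $k$ via Lemma \ref{lemma:isom} and iterated crossed-product manipulations, but the structural argument above sidesteps this entirely.
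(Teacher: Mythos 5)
Your argument is correct, and it takes a genuinely different route from the paper's. You invoke the general principle that a planar algebra morphism out of a connected planar algebra whose trace tangles define non-degenerate traces is automatically injective: the morphism intertwines the tangle computing $(x,y)\mapsto \mathrm{tr}(xy)$, its $0$-box components are forced (by the empty tangle) to be the identity on $\k$, and non-degeneracy of the trace on each $P(\tilde{D}(H))_{k,\epsilon}$ --- which the paper records as part of the main theorem of \cite{KdySnd2006}, applicable since $\tilde{D}(H)$ is again finite-dimensional, semisimple and cosemisimple --- then annihilates the kernel. The paper instead argues explicitly: it first reduces to $\epsilon=+1$ via the one-rotation tangles, composes $\Psi_{k,+}$ with the isomorphism $Z_{X^{k,+}}^{P}:(\tilde{D}(H))^{\otimes(k-1)}\to P_{k,+}$ coming from Lemma 16 of \cite{KdySnd2006}, rewrites the result as $Z_{S^{2k,+}}^{P(H^*)}$ applied to an explicit tensor $\eta_k(\cdots)$, and proves the Hopf-algebraic map $\eta_k$ injective by induction, exhibiting a concrete inverse of the relevant factor $\theta$. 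Your route buys brevity and generality (it works verbatim for any morphism out of a connected, non-degenerate planar algebra and requires no computation); the paper's route buys an explicit formula for the image of a spanning set of $P_{k,+}$ inside $P(H^*)_{2k,+}$, which is in the same spirit as the explicit description of the image carried out in Section 5. The one point you should make explicit is the non-degeneracy of the trace at the colours $(k,-)$ of the extended (unrestricted) planar algebra: the cited result is proved in the restricted setting, so either note that non-degeneracy at $(k,-)$ follows from that at $(k,+)$ via the one-rotation isomorphisms together with sphericality, or simply perform the paper's initial reduction to $\epsilon=+1$ before applying your trace argument. This is a presentational caveat, not a gap.
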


Before proceeding with the proof we recall a result from \cite{KdySnd2006}.
Let ${\mathcal T}(k,\epsilon)$ denote the set of $(k,\epsilon)$ tangles
with $k-1$ (interpreted as $0$ for $k=0$) internal boxes of colour $(2,+)$ and no `internal regions'. The result then asserts:

\begin{lemma}
[Lemma 16 of \cite{KdySnd2006}]
For each tangle $X \in {\mathcal T}(k,\epsilon)$, the map $Z_X^{P(H)}:
(P_{2,+}(H))^{\otimes(k-1)} \rightarrow P_{k,\epsilon}(H)$ is an isomorphism.
\end{lemma}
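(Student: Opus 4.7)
The plan is to combine a dimension count with a reduction to a single canonical tangle, and then propagate the result along local moves generated by the exchange relation.

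First, by the main theorem of \cite{KdySnd2006} (quoted in the preliminaries), $\dim P_{k,\pm}(H) = n^{k-1}$, and also $\dim P_{2,+}(H) = n$. Hence $(P_{2,+}(H))^{\otimes(k-1)}$ and $P_{k,\epsilon}(H)$ have the same dimension $n^{k-1}$, so for each $X \in \mathcal{T}(k,\epsilon)$ it suffices to prove either injectivity or surjectivity of $Z_X^{P(H)}$. (The base cases $k=0,1$ are trivial: the unique tangle in $\mathcal{T}(k,\epsilon)$ has no internal boxes and $Z_X^{P(H)}$ is just the identity on $\k$.)

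Next, I would single out one ``standard'' tangle $X_0 \in \mathcal{T}(k,\epsilon)$ — precisely the one used in Lemma \ref{lemma:isom} to describe the algebra isomorphisms from the iterated crossed products onto $P_{k,\epsilon}(H)$. For this $X_0$, the composite of $Z_{X_0}^{P(H)}$ with the natural vector space identifications $H \cong P_{2,+}(H)$ and $H^* \cong P_{2,+}(H)$ (the latter via the Fourier transform $F$) in alternating tensor factors is exactly the map of Lemma \ref{lemma:isom}. Since that map is an algebra isomorphism, $Z_{X_0}^{P(H)}$ is itself a linear isomorphism.

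The remaining task is to connect an arbitrary $X \in \mathcal{T}(k,\epsilon)$ to $X_0$ by a sequence of moves that each induce an invertible linear endomorphism of $(P_{2,+}(H))^{\otimes(k-1)}$. The allowed moves are planar isotopy (which leaves $Z_X^{P(H)}$ unchanged) and one-strand exchanges of adjacent boxes coming from relation (E). An exchange replaces $Z_X^{P(H)}$ by $Z_{X'}^{P(H)} \circ \Phi$, where $\Phi$ is built from the comultiplication $\Delta$ of $H$ (combined with multiplication), and is invertible because the antipode $S$ — which is involutive in the semisimple, cosemisimple case — supplies its inverse.

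The main obstacle is the combinatorial claim that any $X \in \mathcal{T}(k,\epsilon)$ can be brought to $X_0$ by isotopies and exchanges. This is where the hypothesis of no internal regions is used crucially: it forces every region to touch the boundary, so the dual of the strand pattern is a planar tree joining the internal discs to the external disc. One then argues by induction on $k$, picking an internal box adjacent (in this tree) to the $*$-region of the external disc and ``sliding'' it into standard position via exchanges, so that the rest of the tangle becomes an element of $\mathcal{T}(k-1,\epsilon)$ with one fewer internal box. Once this reduction is in place, bijectivity of $Z_X^{P(H)}$ transfers from bijectivity of $Z_{X_0}^{P(H)}$, completing the argument.
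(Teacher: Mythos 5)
The first thing to note is that the paper contains no proof of this lemma: it is imported wholesale from Lemma~16 of \cite{KdySnd2006}, with only the remark that the restrictions $\epsilon=+1$ and $k\geq 3$ made there are inessential. So your proposal is really competing with the proof in that reference. Your opening moves are sound: the dimension count $\dim P_{k,\pm}(H)=n^{k-1}=\dim\bigl(P_{2,+}(H)\bigr)^{\otimes(k-1)}$ correctly reduces the problem to proving injectivity or surjectivity alone; the standard tangle $X_0$ underlying Lemma~\ref{lemma:isom} does give a linear isomorphism; and a single application of the exchange relation does perturb $Z_X$ only by the invertible map $a\otimes b\mapsto a_1\otimes a_2b$ (with inverse $a\otimes b\mapsto a_1\otimes S(a_2)b$).

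The genuine gap is the step you yourself flag as ``the main obstacle'': that every $X\in{\mathcal T}(k,\epsilon)$ can be carried to $X_0$ by isotopies and exchanges. This is where the entire content of the lemma is concentrated, and your sketch of it does not hold up. The one concrete structural claim offered in its support --- that the dual of the strand pattern is a planar tree --- is false: a tangle in ${\mathcal T}(k,\epsilon)$ has $3k-2$ strands and $2k$ regions, so its region-adjacency graph has $k-1$ independent cycles (one encircling each internal box); already for $k=2$ it is a $4$-cycle. What the ``no internal regions'' hypothesis actually yields is that each region meets the external boundary in exactly one arc, and converting this into an inductive decomposition requires a real case analysis that your sketch suppresses: boxes whose $*$-arcs face the ``wrong'' way (these need relation (A), absent from your list of moves, to be rotated by $180^{\circ}$ at the cost of an antipode on that tensor factor), boxes attached to the external boundary along widely separated arcs that must be slid past several others, and the verification that each exchange move keeps the tangle inside ${\mathcal T}(k,\epsilon)$ without creating internal regions. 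None of this is routine; it is the bulk of the argument in \cite{KdySnd2006}. As written, your proof establishes the lemma only for those tangles already known to be reachable from $X_0$ by your moves, which is not yet all of ${\mathcal T}(k,\epsilon)$.
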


While the statement in \cite{KdySnd2006} assumes $\epsilon=1$ and $k \geq 3$, it is easy to see that neither restriction is really necessary.

\begin{proof}[Proof of Proposition \ref{injective}]
Let $\Psi: P \rightarrow \,^{(2)}\!P(H^*)$ denote the planar
algebra morphism of Proposition \ref{main}, which is a collection of maps
$\Psi_{k,\epsilon}: P_{k,\epsilon} \rightarrow (^{(2)}\!P(H^*))_{k,\epsilon} = P_{2k,+}(H^*)$ for each colour $(k,\epsilon)$. To see that each of these is injective, it suffices to check this when either $k=0$ or when $\epsilon=1$ (since the one-rotation tangles for $k >0$ give isomorphisms). The cases when $k=0$ (and $\epsilon = \pm1$) are obvious
since both sides are naturally isomorphic to $\bf k$ with the $\Psi_{0,\pm}$'s
reducing to the identity map under these isomorphisms. Also, $\Psi_{1,+}$
takes $1_1 \in P_{1,+}$ to $1_2 \in P_{2,+}(H^*)$, and is therefore injective.

For $k \geq 2$ (and $\epsilon=1$)
consider the family of tangles $X^{k,+}$ with $k-1$ internal boxes of colour $(2,+)$ defined inductively as in Figure \ref{fig:xkind}.
\begin{figure}[!h]
\begin{center}
\psfrag{xk}{\huge $X^{k,+}$}
\psfrag{kp1}{\huge $k$}
\psfrag{x2}{\huge $X^{2,+} =$}
\psfrag{xkp1}{\huge $X^{k+1,+} =$}
\resizebox{9cm}{!}{\includegraphics{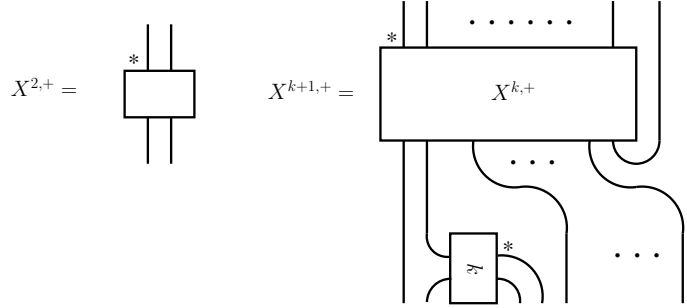}}
\end{center}
\caption{Inductive definition of $X^{k+1,+}$}
\label{fig:xkind}
\end{figure}
It is easy to see that $X^{k,+} \in {\mathcal T}(k,+)$. Thus $Z_{X^{k,+}}^{P} :  (\tilde{D}(H))^{\otimes (k-1)} \rightarrow P_{k,+}$ is an isomorphism,
and to show that $\Psi_{k,+}$ is injective it suffices to see that $\Psi_{k,+} \circ Z_{X^{k,+}}^{P}$ is injective.

Since $\Psi$ is a planar algebra morphism defined by the extension of the map of Figure \ref{fig:mapping0}, it follows easily that $\Psi_{k,\epsilon} \circ Z_{X^{k,+}}^{P}((f^1 \otimes x^1) \otimes (f^2 \otimes x^2) \otimes
\cdots \otimes (f^{(k-1)} \otimes x^{(k-1)}))$ is given by the element of $P_{2k,+}(H^*)$ shown in Figure \ref{fig:injec}.
\begin{figure}[!h]
\begin{center}
\psfrag{f11}{\huge $f^1_1$}
\psfrag{c}{\huge $\cdots$}
\psfrag{f12}{\huge $f^1_2$}
\psfrag{f21}{\huge $f^2_1$}
\psfrag{f22}{\huge $f^2_2$}
\psfrag{f1km1}{\huge $f^{k-1}_1$}
\psfrag{f2km1}{\huge $f^{k-1}_2$}
\psfrag{fsx1}{\Large $FSx^1$}
\psfrag{fsx2}{\Large $FSx^2$}
\psfrag{fsxkm1}{\Large $FSx^{k-1}$}
\resizebox{12cm}{!}{\includegraphics{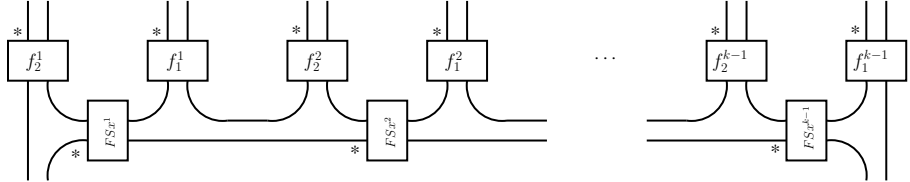}}
\end{center}
\caption{$\Psi_{k,\epsilon} \circ Z_{X^{k,+}}^{P}((f^1 \otimes x^1) \otimes (f^2 \otimes x^2) \otimes
\cdots \otimes (f^{(k-1)} \otimes x^{(k-1)}))$}
\label{fig:injec}
\end{figure}
Some manipulation with the exchange relation in $P(H^*)$ shows that this element is also equal to $Z_{S^{2k,+}}^{P(H^*)}
( f^1_2 \otimes FSx^1Sf^2_3 \otimes f^1_1Sf^2_2 \otimes f^2_4FSx^2Sf^3_3 \otimes f^2_1Sf^3_2 \otimes \cdots \otimes f^{k-2}_4FSx^{k-2}Sf^{k-1}_3 \otimes f^{k-2}_1Sf^{k-1}_2 \otimes f^{k-1}_4FSx^{k-1} \otimes f^{k-1}_1)$
 where $S^{2k,+}$ is the $(2k,+)$ tangle with $2k-1$ internal boxes of colour $(2,+)$ shown in Figure \ref{fig:Stangle}.
\begin{figure}[!h]
\begin{center}
\psfrag{f11}{\huge $3$}
\psfrag{f12}{\huge $1$}
\psfrag{f21}{\huge $5$}
\psfrag{f22}{\huge $f^2_2$}
\psfrag{f1km1}{\huge $2k-1$}
\psfrag{f2km1}{\huge $f^{k-1}_2$}
\psfrag{fsx1}{\huge $2$}
\psfrag{fsx2}{\huge $4$}
\psfrag{fsxkm1}{\huge $2k-2$}
\psfrag{c}{\huge $\cdots$}
\resizebox{10cm}{!}{\includegraphics{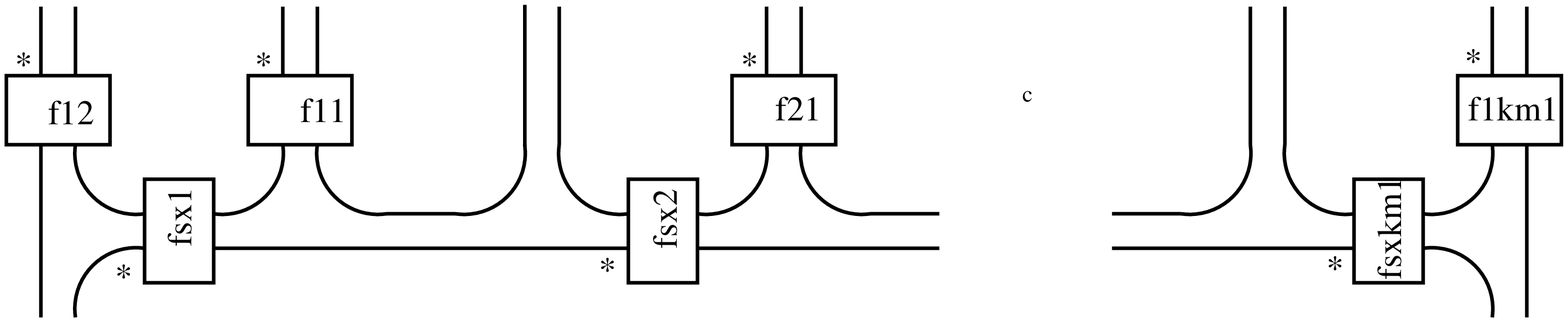}}
\end{center}
\caption{The tangle $S^{2k,+}$}
\label{fig:Stangle}
\end{figure}
Now observe that $S^{2k,+}$ is in ${\mathcal T}(2k,+)$ and thus the injectivity statement desired reduces to the Hopf algebra statement: for
every $k \geq 2$ the map, say $\eta_k : (H^* \otimes H)^{\otimes (k-1)} \rightarrow (H^*)^{\otimes (2k-1)}$ defined by
\begin{eqnarray*}
f^1 \otimes x^1 \otimes f^2 \otimes x^2 \otimes
\cdots \otimes f^{(k-1)} \otimes x^{(k-1)} \stackrel{\eta_k}{\longrightarrow}\\
f^1_2 \otimes FSx^1Sf^2_3 \otimes f^1_1Sf^2_2 \otimes f^2_4FSx^2Sf^3_3 \otimes f^2_1Sf^3_2 \otimes \cdots  \\
\cdots \otimes f^{k-2}_4FSx^{k-2}Sf^{k-1}_3 \otimes f^{k-2}_1Sf^{k-1}_2 \otimes f^{k-1}_4FSx^{k-1} \otimes f^{k-1}_1&
\end{eqnarray*}
is injective. It is this statement that we will prove by induction on $k$.

The basis case when $k=2$ asserts that $\eta_2$ defined by $\eta_2(f^1 \otimes x^1) = f^1_2 \otimes FSx^1 \otimes f^1_1$ is injective which is clear. For the inductive step, assume that $\eta_k$ is injective and observe that with the linear map $\theta: H^* \otimes H \otimes H^* \otimes H^* \rightarrow (H^*)^{\otimes 4}$ defined by $\theta(f \otimes x \otimes k \otimes q) = f_2 \otimes FSx Sk_2 \otimes f_1Sk_1 \otimes k_3q$, a short calculation shows that
$\eta_{k+1} = (\theta \otimes id^{\otimes{(2k-3)}}) \circ (id \otimes id \otimes \eta_k)$. Thus to show $\eta_{k+1}$ is injective, it suffices to see that
$\theta$ is injective.

Finally, a lengthy but complete routine calculation shows that the map
$f \otimes g \otimes k \otimes q \mapsto f_4 \otimes F(gSk_1f_3) \otimes 
Sk_3f_1 \otimes Sf_2k_2q$ is (a right inverse of, and hence) the inverse of $\theta$, completing the proof of the inductive step and hence, of the proposition.
\end{proof}

\section{Characterisation of the image}

This section will be devoted to an explicit characterisation of the image of $P(\tilde{D}(H))$ in $^{(2)}\!P(H^*)$.

Fix $k \geq 2$. Consider the (algebra) maps $\alpha,\beta : H \rightarrow End(P(H^*)_{2k,+})$ defined for $x \in H$ and $X \in P(H^*)_{2k,+}$ by Figure \ref{fig:action}.
\begin{figure}[!h]
\begin{center}
\psfrag{fx1}{\huge $Fx_1$}
\psfrag{fx2}{\huge $Fx_2$}
\psfrag{fx3}{\huge $Fx_3$}
\psfrag{fxkm1}{\huge $Fx_{k-1}$}
\psfrag{fxkm2}{\huge $Fx_{k-2}$}
\psfrag{fxk}{\huge $Fx_k$}
\psfrag{cdots}{\huge $\cdots$}
\psfrag{X}{\huge $X$}
\resizebox{12.5cm}{!}{\includegraphics{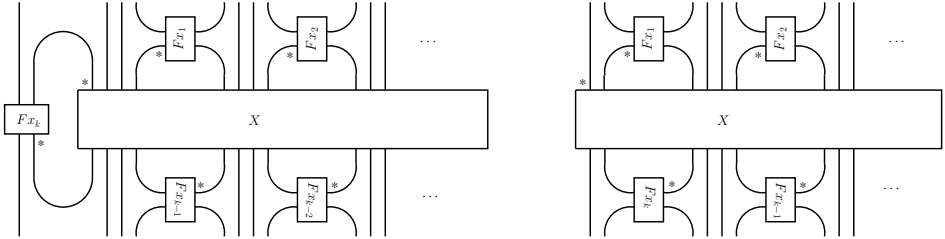}}
\end{center}
\caption{Definition of $\alpha_x(X)$ and $\beta_x(X)$}
\label{fig:action}
\end{figure}

The main result of this section is the following proposition. We will use the notation $Q$ to denote the planar subalgebra of $^{(2)}\!P(H^*)$ that is the
image of $P=P(\tilde{D}(H))$. Thus $Q_{k,\pm} \subseteq P(H^*)_{2k,+}$.

\begin{proposition}\label{charac}
 For every $k \geq 2$,
\begin{eqnarray*}
Q_{k,+} &=& \{X \in P(H^*)_{2k,+}: \alpha_h(X) = X\},\\
Q_{k,-} &=& \{X \in P(H^*)_{2k,+}: \beta_h(X) = X\}.
\end{eqnarray*}
\end{proposition}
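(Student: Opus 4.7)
The plan is to establish each of the two asserted equalities by proving two inclusions, with the arguments for $\alpha_h$ and $\beta_h$ being essentially identical (so I will focus on $\alpha_h$). As a first remark, since $\alpha$ is an algebra map and $h^2 = \epsilon(h)h = h$ in $H$ (by the normalisation $\epsilon(h) = 1$), the operator $\alpha_h$ is an idempotent, and its image coincides with the fixed-point subspace described in the statement.

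For the inclusion $Q_{k,+} \subseteq \mathrm{Im}(\alpha_h)$, I would prove the stronger identity $\alpha_x \circ \Psi_{k,+} = \epsilon(x)\,\Psi_{k,+}$ for every $x \in H$, from which the required invariance of $\Psi(y)$ under $\alpha_h$ follows by specialising to $x = h$. Using the surjectivity of $Z_{X^{k,+}}^{P(\tilde{D}(H))} : \tilde{D}(H)^{\otimes(k-1)} \to P(\tilde{D}(H))_{k,+}$ (Lemma 16 of \cite{KdySnd2006}), it suffices to verify this identity on the elements already drawn in Figure \ref{fig:injec}. This reduces to a diagrammatic statement in $P(H^*)$: when the $k$ boxes $Fx_1, \ldots, Fx_k$ produced by $\alpha_x$ are inserted on the designated bottom strands of Figure \ref{fig:injec}, the result equals $\epsilon(x)$ times the original element. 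I expect to verify this by iteratively sliding each $Fx_i$ upward, via the exchange relation in $P(H^*)$, through the $f^j_\ell$ and $FSx^j$ boxes, so that adjacent pairs $(Fx_i, Fx_{i+1})$ come together on a cap, whereupon the antipode and integral relations in $P(H^*)$ absorb them; inductively on $k$, only a single scalar $\epsilon(x)$ survives.

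For the reverse inclusion, I would use a dimension count: since $\alpha_h$ is an idempotent, it suffices to show $\dim \mathrm{Im}(\alpha_h) \le n^{2k-2} = \dim Q_{k,+}$. The cleanest route is to show that $\alpha$ turns $P(H^*)_{2k,+}$ into a free left $H$-module, which yields $\dim \mathrm{Im}(\alpha_h) = \dim P(H^*)_{2k,+}/\dim H = n^{2k-1}/n = n^{2k-2}$ at once. Freeness can be checked via the iterated crossed-product identification of Lemma \ref{lemma:isom}: under $P(H^*)_{2k,+} \cong H^* \rtimes H \rtimes \cdots \rtimes H^*$, the action $\alpha_x$ corresponds to left multiplication by the components of $\Delta^{(k-1)}(F(x))$ on the $k$ alternating copies of $H^*$, and since $F : H \to H^*$ is a linear isomorphism and $H^*$ acts freely on itself by left multiplication, freeness of the combined action follows.

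The principal obstacle is the diagrammatic verification in the second paragraph: while each individual application of an exchange or antipode move is routine, correctly threading the $k$ inserted boxes through the nested structure of Figure \ref{fig:injec} and keeping track of the resulting Sweedler indices requires careful bookkeeping. The natural approach is induction on $k$, with the base case $k = 2$ paralleling the multiplication-relation calculation already performed in the proof of Proposition \ref{main}.
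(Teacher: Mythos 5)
Your overall architecture---both inclusions, with the forward one checked on generators of the image and the reverse one obtained from a dimension count against $\dim Q_{k,+}=n^{2k-2}$---matches the paper, and your opening observation that $\alpha_h$ is an idempotent whose image is the fixed-point space is correct. For the forward inclusion your route is organized differently: the paper does not push the boxes $Fx_1,\dots,Fx_k$ directly through the picture of Figure \ref{fig:injec}. It first reformulates $\alpha_h(X)=X$ as the condition that an auxiliary element $\tilde X$ commutes with all $\theta_{k+1}(x)$ (Lemma \ref{lemma:fix}), and then verifies that commutation on the staircase decomposition of an element of $Q_{2k,+}$ into $2k-1$ elements of $Q_{2,+}$, using just the two local commutation identities of Lemma \ref{lemma:comm} (the second of which reduces to $h_1a\otimes Sh_2b=bh_1\otimes aSh_2$ and genuinely uses the integral property of $h$). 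Your proposed identity $\alpha_x\circ\Psi_{k,+}=\epsilon(x)\,\Psi_{k,+}$ is true---it is equivalent to $\alpha_h$-invariance once $\alpha$ is known to be an algebra map---but the absorption mechanism you describe (adjacent pairs $(Fx_i,Fx_{i+1})$ cancelling on caps) is exactly the content that must be proved, and you have deferred all of it; as written this half is a plan rather than a proof.

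The reverse inclusion contains a genuine gap. You claim that under the crossed-product identification of Lemma \ref{lemma:isom} the action $\alpha_x$ becomes componentwise left multiplication by the components of $\Delta^{(k-1)}(F(x))$ on the $k$ copies of $H^*$. This cannot be right as stated: $F$ is not an algebra homomorphism, so $x\mapsto\bigl(\text{multiply by the components of }\Delta^{(k-1)}(Fx)\bigr)$ does not define an action of the algebra $H$ at all, whereas $\alpha$ is an algebra map by construction; moreover multiplication in the iterated crossed product is twisted by the module actions, so left multiplication there is not componentwise. Separately, even with a correct module structure, ``each tensor factor is free, hence the diagonal action is free'' is not a valid inference---one needs the untwisting isomorphism $H\otimes M\cong H\otimes M^{\mathrm{triv}}$, $a\otimes m\mapsto a_1\otimes S(a_2)m$. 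The paper avoids both problems by coordinatizing $P(H^*)_{2k,+}$ with a different tangle $V^{2k,+}\in{\mathcal T}(2k,+)$, in which $\alpha_h$ visibly acts by $Fa^1\otimes Fa^2\otimes\cdots\mapsto F(h_1a^1)\otimes Fa^2\otimes F(h_2a^3)\otimes\cdots$, i.e., as the diagonal left regular action of $H$ on the $k$ odd tensor factors, and then bounds the fixed space by exhibiting it inside the image of a map from $H^{\otimes(k-1)}$. Your freeness idea is salvageable in those coordinates, but not in the form you wrote it. Finally, the $Q_{k,-}$ case is not rerun ``identically'' in the paper: it is deduced in one line from the $Q_{k,+}$ case by applying the one-rotation tangle, using that $Q$ is a planar subalgebra; you should do the same rather than duplicating the argument for $\beta_h$.
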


We pave the way for a proof of this proposition by giving an alternate
description of the fixed points under $\alpha_h$. We will need some notation. 
For $x \in H$, let $\theta_k(x)$ denote the element of $P(H^*)_{4k,+}$ depicted
in Figure~\ref{fig:thetakx}. For $X \in P(H^*)_{4k,+}$ (resp. $P(H^*)_{4k+2,+}$) let $\tilde{X}\in P(H^*)_{4k+4,+}$ denote the element on the left (resp. right) in Figure \ref{fig:pushx}.
\begin{figure}[!h]
\begin{center}
\psfrag{fx1}{\huge $Fx_1$}
\psfrag{fx2}{\huge $Fx_2$}
\psfrag{fx3}{\huge $Fx_3$}
\psfrag{fxkm1}{\huge $Fx_{k-1}$}
\psfrag{fxkm2}{\huge $Fx_{k-2}$}
\psfrag{fxk}{\huge $Fx_k$}
\psfrag{cdots}{\huge $\cdots$}
\psfrag{X}{\huge $X$}
\resizebox{8cm}{!}{\includegraphics{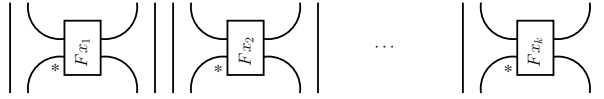}}
\end{center}
\caption{Definition of $\theta_k(x)$}
\label{fig:thetakx}
\end{figure}

\begin{figure}[!h]
\begin{center}
\psfrag{fx1}{\huge $Fx_1$}
\psfrag{fx2}{\huge $Fx_2$}
\psfrag{fx3}{\huge $Fx_3$}
\psfrag{fxkm1}{\huge $Fx_{k-1}$}
\psfrag{fxkm2}{\huge $Fx_{k-2}$}
\psfrag{fxk}{\huge $Fx_k$}
\psfrag{cdots}{\huge $\cdots$}
\psfrag{x}{\huge $X$}
\resizebox{8cm}{!}{\includegraphics{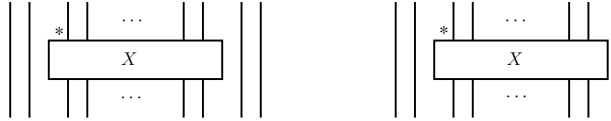}}
\end{center}
\caption{Definition of $\tilde{X}$}
\label{fig:pushx}
\end{figure}

\begin{lemma}\label{lemma:fix}
For $X \in P(H^*)_{4k,+}$ or $X \in P(H^*)_{4k+2,+}$, the following conditions are equivalent:\\
(1) $\alpha_h(X) = X$, and\\
(2) $\tilde{X}$ commutes with $\theta_{k+1}(x)$ for all $x \in H$.\\
\end{lemma}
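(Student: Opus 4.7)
The plan is to unpack both conditions as explicit diagrammatic equalities in $P(H^*)$ and then exploit the defining property of the integral $h$ to translate between them. First I would use the exchange relation in $P(H^*)$ to bring the pictures $\alpha_h(X)$ (from Figure \ref{fig:action}) and the two products $\tilde X\cdot\theta_{k+1}(x)$ and $\theta_{k+1}(x)\cdot\tilde X$ (assembled from Figures \ref{fig:thetakx} and \ref{fig:pushx}) into normal forms in which all Hopf-algebra labels sit in a single level around $X$, producing Sweedler-indexed expressions. Since the embedding $X\mapsto\tilde X$ is given by a fixed $(2k+2,+)$- or $(2k+3,+)$-tangle $Y$ with $X$ as its only labelled box, the identities we need will essentially be equalities of elements in a larger box space after applying $Z_Y$.

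For the direction (1) $\Rightarrow$ (2): starting from $\tilde X\cdot\theta_{k+1}(x)$, I would freely insert an $\alpha_h$-pattern around $X$ using the hypothesis $X=\alpha_h(X)$. The key Hopf-algebra identity then at play is
\[
h_1x\otimes h_2 \;=\; h_1\otimes (Sx)h_2,
\]
together with its iterates over the coproduct $\Delta^{(k)}(x)$. Applied diagrammatically via the exchange relation, these identities let us slide the $x_i$ factors of $\theta_{k+1}(x)$ across the inserted $h_j$ factors, transporting them from one side of $X$ to the other and yielding $\theta_{k+1}(x)\cdot\tilde X$. The role of $h$ here is precisely to absorb the adjoint $H$-action.

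For the direction (2) $\Rightarrow$ (1): specialize the commutation to $x=h$. Because $h$ is a two-sided integral, $\theta_{k+1}(h)$ collapses dramatically: since $F(h)=\delta^{-1}\epsilon$ and the integral relation in $P(H^*)$ (Figure \ref{fig:NitNtg}) kills strands labelled by components of $F(h)$, the iterated coproduct $\Delta^{(k)}(h)$ appearing in $\theta_{k+1}(h)$ reduces, after repeated application of the unit and integral relations, to the same wrapping pattern that defines $\alpha_h$ (up to the scalar implicit in the normalisation of $h$). The commutation statement at $x=h$ then reads $\widetilde{\alpha_h(X)}=\tilde X$, and since $X\mapsto\tilde X$ is manifestly injective (the embedding tangle has an inverse obtained by capping off the added strands against $\delta^{-1}$ times a Jones projection), we recover $\alpha_h(X)=X$.

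The main obstacle will be the bookkeeping: tracking the Sweedler indices of $\Delta^{(k+1)}(x)$ and $\Delta^{(k)}(h)$, and verifying that the shadings, rotations, and positions of the $*$-arcs align correctly when one rewrites the three pictures in a common form. Once this is done, the underlying algebra reduces to iterated applications of $h_1x\otimes h_2=h_1\otimes(Sx)h_2$ and $hx=\epsilon(x)h$, both of which combine cleanly with the exchange and integral relations of Section 2.3. A secondary technical point is that the two cases $X\in P(H^*)_{4k,+}$ and $X\in P(H^*)_{4k+2,+}$ differ only in the parity of the trailing strands in $\tilde X$; the argument goes through in parallel, so I would handle one case in detail and remark on the sign/shading adjustment for the other.
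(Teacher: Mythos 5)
Your direction (1) $\Rightarrow$ (2) is essentially the paper's argument: one rewrites $\widetilde{\alpha_h(X)}$ (using traciality of $h$, i.e.\ the cyclic invariance of $h_1\otimes\cdots\otimes h_{2k}$) as the conjugation $\theta_{k+1}(h_1)\tilde{X}\theta_{k+1}(Sh_2)$, and then the identity $xh_1\otimes h_2=h_1\otimes Sxh_2$ slides $x$ from one side to the other exactly as you describe. That half is fine, modulo the bookkeeping you acknowledge.

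The converse is where there is a genuine gap. You specialize the commutation hypothesis to the single element $x=h$ and claim that $\theta_{k+1}(h)$ collapses to ``the same wrapping pattern that defines $\alpha_h$,'' so that commutation at $x=h$ reads $\widetilde{\alpha_h(X)}=\tilde{X}$. This conflates one-sided multiplication with two-sided conjugation: the operator $\alpha_h$ is realized on $\tilde{X}$ as $\theta_{k+1}(h_1)\,\tilde{X}\,\theta_{k+1}(Sh_2)$, with the two legs of $\Delta(h)$ entangled across $X$, not as $\tilde{X}\cdot\theta_{k+1}(h)$ or $\theta_{k+1}(h)\cdot\tilde{X}$. (Also, $F(h)=\delta^{-1}\epsilon$ does not simplify the individual boxes $Fh_1,\dots,Fh_{k+1}$ of $\theta_{k+1}(h)$, since the integral relation applies to $F$ of the whole integral, not to its Sweedler legs.) Moreover, commuting with the single element $\theta_{k+1}(h)$ is a strictly weaker hypothesis than condition (2): for $H=\k[G]$ it amounts to commuting with the average $\frac{1}{|G|}\sum_g\theta_{k+1}(g)$ rather than with each $\theta_{k+1}(g)$, so an argument deducing (1) from it would prove too much. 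The correct converse uses the full strength of (2): from $\widetilde{\alpha_h(X)}=\theta_{k+1}(h_1)\tilde{X}\theta_{k+1}(Sh_2)$, apply the hypothesis to the (generic) elements $Sh_2$ appearing in the Sweedler sum to move $\theta_{k+1}(Sh_2)$ past $\tilde{X}$, and then use $\theta_{k+1}(h_1)\theta_{k+1}(Sh_2)=\theta_{k+1}(h_1Sh_2)=\epsilon(h)\theta_{k+1}(1)$ to conclude $\widetilde{\alpha_h(X)}=\tilde{X}$; injectivity of $X\mapsto\tilde{X}$ (which you correctly note) then finishes the proof.
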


\begin{proof}
We prove the equivalence of the conditions only for $X \in P(H^*)_{4k,+}$ leaving the case $X \in P(H^*)_{4k+2,+}$ for the reader. 
Suppose that (1) holds so that $\alpha_h(X) = X$. Then, using the definitions
of $\alpha_x(X)$ and of $\tilde{X}$, we have that $\tilde{X}$ is given by Figure \ref{fig:tildex}.
\begin{figure}[!h]
\begin{center}
\psfrag{fx1}{\huge $Fh_1$}
\psfrag{fx2}{\huge $Fh_2$}
\psfrag{fx3}{\huge $Fh_3$}
\psfrag{fxkm1}{\huge $Fh_{2k-1}$}
\psfrag{fxkm2}{\huge $Fh_{2k-2}$}
\psfrag{fxk}{\huge $Fh_{2k}$}
\psfrag{fx2k}{\huge $Fh_k$}
\psfrag{cdots}{\huge $\cdots$}
\psfrag{X}{\huge $X$}
\resizebox{8cm}{!}{\includegraphics{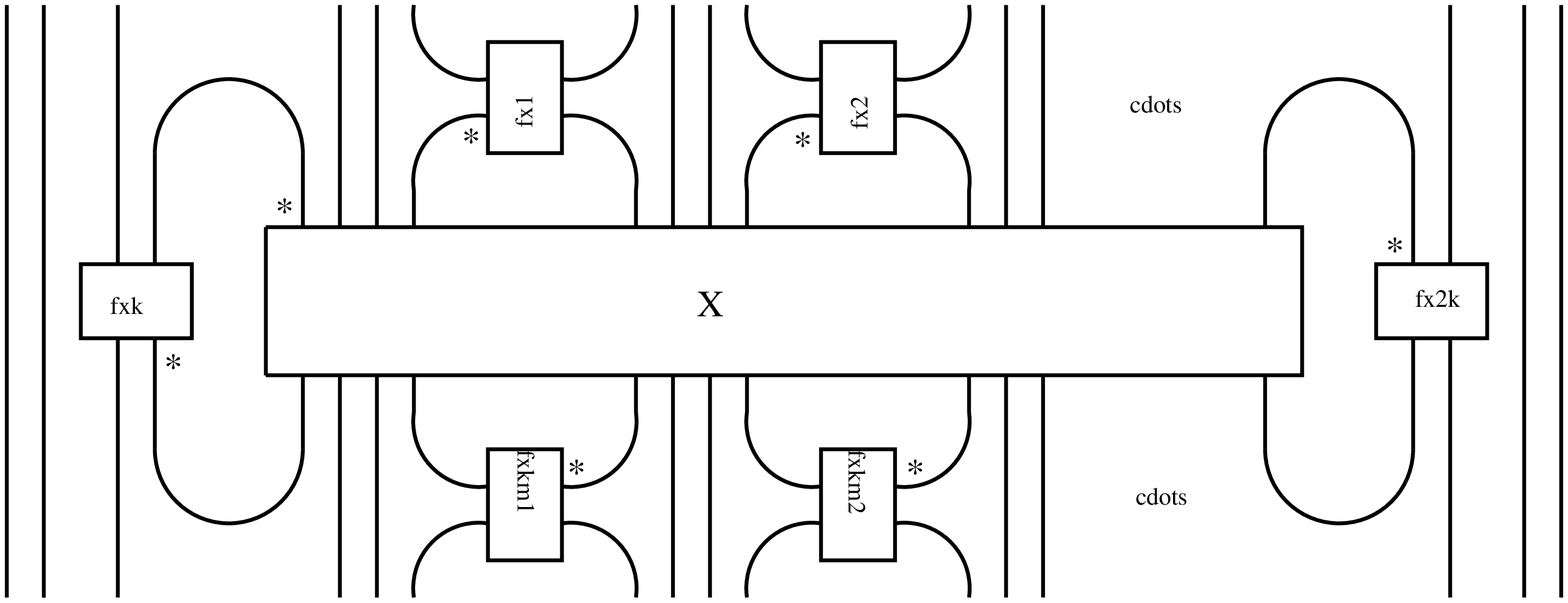}}
\end{center}
\caption{$\tilde{X}$ when $X \in P(H^*)_{4k,+}$ and $\alpha_h(X) = X$}
\label{fig:tildex}
\end{figure}
With a little manipulation and using traciality of $h$, so that $h_1 \otimes h_2 \otimes \cdots \otimes h_{2k} = h_{2k} \otimes h_1 \otimes h_2 \otimes \cdots \otimes h_{2k-1}$, we see that $\tilde{X}$ is also given by Figure \ref{fig:equivx}.
\begin{figure}[!h]
\begin{center}
\psfrag{fx1}{\huge $Fh_2$}
\psfrag{fx2}{\huge $Fh_3$}
\psfrag{fx3}{\huge $Fh_3$}
\psfrag{fxkm1}{\huge $Fh_{2k}$}
\psfrag{fxkm2}{\huge $Fh_{2k-2}$}
\psfrag{fxk}{\huge $Fh_{k+1}$}
\psfrag{fxkp1}{\huge $Fh_{k+2}$}
\psfrag{fx2k}{\huge $Fh_{2k+1}$}
\psfrag{fx2kp1}{\huge $Fh_{2k+2}$}
\psfrag{fx2kp2}{\huge $Fh_{1}$}
\psfrag{cdots}{\huge $\cdots$}
\psfrag{X}{\huge $X$}
\resizebox{8cm}{!}{\includegraphics{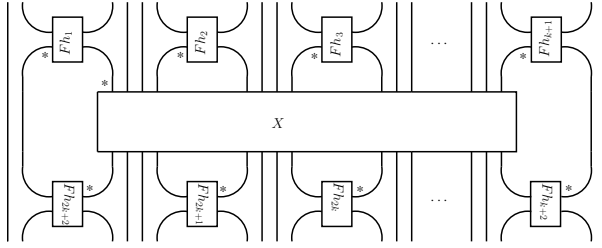}}
\end{center}
\caption{Equivalent form of $\tilde{X}$ when $X \in P(H^*)_{4k,+}$ and $\alpha_h(X) = X$}
\label{fig:equivx}
\end{figure}
Thus, $\tilde{X} =  \theta_{k+1}(h_1)\tilde{X}\theta_{k+1}(Sh_2)$.
Hence, for any $x \in H$,  $\tilde{X}\theta_{k+1}(x) = \theta_{k+1}(h_1)\tilde{X}\theta_{k+1}(Sh_2x)
=  \theta_{k+1}(xh_1)\tilde{X}\theta_{k+1}(Sh_2) = \theta_{k+1}(x)\tilde{X}$,
so that (2) is verified to hold. Conversely suppose that (2) holds for $X \in P_{4k,+}$. It follows that the element in Figure \ref{fig:equivx} equals $\tilde{X}$ and hence also the element of Figure \ref{fig:tildex}. This then implies that $\alpha_h(X) = X$, proving (1) as needed.
\end{proof}

Next, we need some preliminary commutativity statements.
\begin{lemma}\label{lemma:comm}
The following two commutativity statements hold for all $x \in H$ and all $X \in Q_{2,+}$.
\begin{figure}[!h]
\begin{center}
\psfrag{fx1}{\huge $Fx_1$}
\psfrag{fx2}{\huge $Fx_2$}
\psfrag{Fx}{\huge $Fx$}
\psfrag{fxkm1}{\huge $Fh_{2k}$}
\psfrag{fxkm2}{\huge $Fh_{2k-2}$}
\psfrag{fxk}{\huge $Fh_{k+1}$}
\psfrag{fxkp1}{\huge $Fh_{k+2}$}
\psfrag{fx2k}{\huge $Fh_{2k+1}$}
\psfrag{fx2kp1}{\huge $Fh_{2k+2}$}
\psfrag{fx2kp2}{\huge $Fh_{1}$}
\psfrag{cdots}{\huge $\cdots$}
\psfrag{X}{\huge $X$}
\psfrag{L}{\huge $\leftrightarrow$}
\resizebox{12cm}{!}{\includegraphics{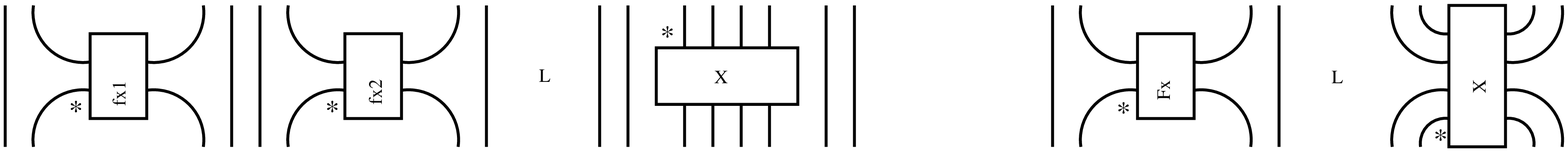}}
\end{center}
\label{fig:comm}
\end{figure}
%
%
\end{lemma}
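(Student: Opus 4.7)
My plan is to reduce both commutativity statements to Hopf-algebraic identities in $H^*$, verified on a spanning set of $Q_{2,+}$. By linearity of all the pictorial operations involved, it suffices to check each statement on the set $\{\Psi(f \otimes a) : f \in H^*,\ a \in H\}$, which spans $Q_{2,+}$ by definition. For each such $X$, Figure \ref{fig:mapping0} gives an explicit presentation as an element of $P(H^*)_{4,+}$ built from the labels $f_1$, $f_2$, and $FSa$.

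For each of the two claimed commutativities, I substitute this explicit form of $X$ into both sides and then repeatedly apply the exchange relation (E) of $P(H^*)$ to slide the $Fx$-labelled box (respectively the $Fx_1$- and $Fx_2$-labelled boxes) across the strands carrying $f_1$, $f_2$, and $FSa$. Each such crossing introduces Sweedler components via the comultiplication of $H^*$. After finitely many applications of (E), together with the antipode relation (A) and the traciality of $p$ (used in the same way as in the verification of the multiplication relation in the proof of Proposition \ref{main}), I expect both sides of each claimed equation to be brought into a common standard form, namely the value of a fixed tangle (essentially an $S^{4,+}$-type tangle) on an explicit tensor of $H^*$-elements built from $f$, $a$, $x$ and their Sweedler components.

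Once both sides are in this normal form, the desired equality reduces to a Hopf-algebraic identity. I anticipate that this identity will be nothing but a pictorial repackaging of the defining multiplication rule
\[
(f \otimes x)(g \otimes y) \;=\; g_1(x_1)\,g_3(Sx_3)\,(fg_2 \otimes yx_2)
\]
of $\tilde{D}(H)$: the two sides of each commutativity relation correspond to the two orderings in which one may multiply $f \otimes a$ and $\epsilon \otimes x$ inside $\tilde{D}(H)$ and then push the result through $\Psi$, and the two results must agree because $\Psi$ is an algebra map onto $Q_{2,+}$ (as follows from the multiplication relation already proved in Proposition \ref{main}).

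The main obstacle will be the purely combinatorial bookkeeping of strand positions, shading, and $*$-arc placements, especially because the inserted $x$-data appears on non-adjacent strands of the picture and the coproduct of $x$ must be tracked carefully through each exchange. Once both sides have been brought into the standard form, the remaining verification is a short Sweedler-notation calculation using coassociativity, the antipode axiom, and the identities $F_{H^*}F_H = S$ and $F(a) = \delta p_1(a)p_2$, of the same flavour as the computation already carried out in the proof of the multiplication relation of Proposition \ref{main}.
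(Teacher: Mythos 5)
Your opening move coincides with the paper's: by linearity it suffices to check both statements on the generators $\Psi(f\otimes a)$ of $Q_{2,+}$ in the explicit form of Figure \ref{fig:mapping0}, and then to massage the pictures with the exchange relation until a Hopf-algebraic identity remains. The gap is in the mechanism you propose for why that final identity should hold. You claim the two sides of each relation are the two orderings of the product of $f\otimes a$ and $\epsilon\otimes x$ in $\tilde{D}(H)$ pushed through $\Psi$, so that equality follows from $\Psi$ being an algebra map. This cannot work. First, $f\otimes a$ and $\epsilon\otimes x$ do not commute in $\tilde{D}(H)$: one order gives $f\otimes xa$, the other gives $f_1(x_1)f_3(Sx_3)\,f_2\otimes ax_2$. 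Second, and more fundamentally, the elements decorated with $Fx$ (resp.\ $Fx_1$, $Fx_2$) that are being commuted past $X$ are not in the image of $\Psi$ at all: $\Psi(\epsilon\otimes x)$ is a single $FSx$-box straddling the cabled pair of strands, whereas the elements of the lemma place boxes on individual strands and so break the $2$-cabling. The entire point of this lemma (and of Proposition \ref{charac}) is that $Q$ commutes with these \emph{external} elements of $P(H^*)$; that is genuinely new information about the embedding and not a formal consequence of $\Psi$ being a homomorphism.

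What is actually needed, and what the paper supplies, is different for the two relations. For the first, one transports the pictures via Lemma \ref{lemma:isom} into the six-fold iterated crossed product $H^*\rtimes H\rtimes H^*\rtimes H\rtimes H^*\rtimes H$ and checks directly that $\epsilon\rtimes 1\rtimes f_2\rtimes 1\rtimes f_1\rtimes 1$ commutes with $\epsilon\rtimes x_1\rtimes\epsilon\rtimes 1\rtimes\epsilon\rtimes x_2$ (the $x$-components occupy slots that interact trivially with the $f$-components). For the second, the composition of pictures closes a loop, which by the trace relation of $P(H^*)$ produces an evaluation against the integral $h$; the identity to be verified is $f_2\otimes(f_1Sg_1)(h)\,g_2=(f_2Sg_2)(h)\,g_1\otimes f_1$ with $g=Fx$, equivalently $h_1a\otimes Sh_2b=bh_1\otimes aSh_2$, and this rests on the two-sidedness of the integral (via $xh_1\otimes h_2=h_1\otimes Sxh_2$), not on anything about the multiplication of $\tilde{D}(H)$. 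Your plan as stated would not lead you to either of these identities, so the proof does not close.
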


\begin{proof}
To prove the first commutativity relation, from the form of a general generator of $Q_{2,+}$ in Figure \ref{fig:mapping0}, it suffices to see
that the commutativity in Figure \ref{fig:comm2} holds
\begin{figure}[!h]
\begin{center}
\psfrag{fx1}{\huge $Fx_1$}
\psfrag{fx2}{\huge $Fx_2$}
\psfrag{Fx}{\huge $Fx$}
\psfrag{fxkm1}{\huge $Fh_{2k}$}
\psfrag{fxkm2}{\huge $Fh_{2k-2}$}
\psfrag{fxk}{\huge $Fh_{k+1}$}
\psfrag{fxkp1}{\huge $Fh_{k+2}$}
\psfrag{fx2k}{\huge $Fh_{2k+1}$}
\psfrag{f2}{\huge $f_2$}
\psfrag{f1}{\huge $f_1$}
\psfrag{cdots}{\huge $\cdots$}
\psfrag{X}{\huge $X$}
\psfrag{L}{\huge $\leftrightarrow$}
\resizebox{7cm}{!}{\includegraphics{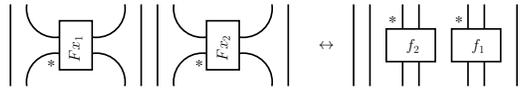}}
\end{center}
\caption{Equivalent form of the first commutativity relation}
\label{fig:comm2}
\end{figure}
To see this, note that calculation  shows that the elements $\epsilon \rtimes 1 \rtimes f_2 \rtimes 1 \rtimes f_1 \rtimes 1$  and $\epsilon \rtimes x_1 \rtimes \epsilon \rtimes 1 \rtimes  \epsilon \rtimes x_2$ of $H^* \rtimes H \rtimes H^* \rtimes H \rtimes H^* \rtimes H$ commute for all $f \in H^*$ and $x \in H$. Now applying the isomorphisms of Lemma \ref{lemma:isom} (to $P(H^*)$)
proves the desired commutativity.

As for the second commutativity relation, again from the form of a general generator of $Q_{2,+}$, it is easily seen to be equivalent to the equation in Figure \ref{fig:comm3} holding for all $f \in H^*$ and $x,a \in H$.
\begin{figure}[!h]
\begin{center}
\psfrag{fsa}{\huge $FSa$}
\psfrag{fx}{\huge $Fx$}
\psfrag{Fx}{\huge $Fx$}
\psfrag{fxkm1}{\huge $Fh_{2k}$}
\psfrag{fxkm2}{\huge $Fh_{2k-2}$}
\psfrag{fxk}{\huge $Fh_{k+1}$}
\psfrag{fxkp1}{\huge $Fh_{k+2}$}
\psfrag{fx2k}{\huge $Fh_{2k+1}$}
\psfrag{f2}{\huge $f_2$}
\psfrag{f1}{\huge $f_1$}
\psfrag{cdots}{\huge $\cdots$}
\psfrag{X}{\huge $X$}
\psfrag{L}{\huge $\leftrightarrow$}
\resizebox{7cm}{!}{\includegraphics{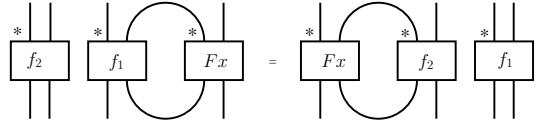}}
\end{center}
\caption{Equivalent form of the second commutativity relation}
\label{fig:comm3}
\end{figure}

Setting $Fx = g$, this is equivalent to verifying the Hopf algebraic identity
$f_2 \otimes (f_1Sg_1)(h) g_2 = (f_2Sg_2)(h) g_1 \otimes f_1$. Evaluate both sides on $a \otimes b$ to get the equivalent identity $h_1a \otimes Sh_2b = bh_1 \otimes aSh_2$ - which is easy to see.
\end{proof} 

\begin{proof}[Proof of Proposition \ref{charac}] We first prove the characterisation of $Q_{k,+}$. Since $Q$ is the image of $P(\tilde{D}(H))$,
it follows from Lemma \ref{lemma:isom} that any element  $X \in Q_{2k,+} \subseteq P(H^*)_{4k,+}$ is of the form shown in Figure \ref{fig:form},
\begin{figure}[!h]
\begin{center}
\psfrag{fx1}{\huge $Fx_1$}
\psfrag{fx2}{\huge $Fx_2$}
\psfrag{fx3}{\huge $Fh_3$}
\psfrag{fxkm1}{\huge $Fh_{2k}$}
\psfrag{fxkm2}{\huge $Fh_{2k-2}$}
\psfrag{fxk}{\huge $Fh_{k+1}$}
\psfrag{fxkp1}{\huge $Fh_{k+2}$}
\psfrag{fx2k}{\huge $Fh_{2k+1}$}
\psfrag{fx2kp1}{\huge $Fh_{2k+2}$}
\psfrag{fx2kp2}{\huge $Fh_{1}$}
\psfrag{cdots}{\huge $\cdots$}
\psfrag{X}{\huge $X_1$}
\psfrag{X=}{\huge $X = $}
\psfrag{X2}{\huge $X_2$}
\psfrag{X3}{\huge $X_3$}
\psfrag{DD}{\huge $\ddots$}
\resizebox{4cm}{!}{\includegraphics{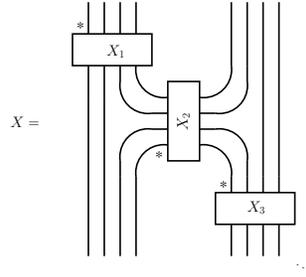}}
\end{center}
\caption{Form of $X \in Q_{2k,+}$}
\label{fig:form}
\end{figure}
\noindent
where there are $2k-1$ 4-boxes and $X_1,X_2,\cdots,X_{2k-1} \in Q_{2,+}$.
It now follows easily from Lemma \ref{lemma:comm} that $\tilde{X}$ commutes with $\theta_{k+1}(x)$ for all $x \in H$. Similarly, if $X \in Q_{2k+1,+} \subseteq P(H^*)_{4k+2,+}$, then too, $\tilde{X}$ commutes with $\theta_{k+1}(x)$ for all $x \in H$. An appeal to Lemma \ref{lemma:fix}
now shows that $Q_{k,+} \subseteq \{X \in P(H^*)_{2k,+}: \alpha_h(X) =X \}$.

To prove the reverse inclusion, it suffices by Proposition \ref{injective} to see that $dim(\{X \in P(H^*)_{2k,+}: \alpha_h(X) =X \}) \leq n^{2k-2}$.
Consider the tangle $V^{2k,+}$ of Figure \ref{fig:tanglev}.
\begin{figure}[!h]
\begin{center}
\psfrag{1}{\large $1$}
\psfrag{2}{\large $2$}
\psfrag{3}{\large $3$}
\psfrag{k-2}{\large $k-2$}
\psfrag{k-1}{\large $k-1$}
\psfrag{k}{\large $k$}
\psfrag{2k-4}{\large $2k-4$}
\psfrag{2k-3}{\large $2k-3$}
\psfrag{2k-2}{\large $2k-2$}
\psfrag{2k-1}{\Large $2k-1$}
\psfrag{c}{\huge $\cdots$}
\resizebox{9cm}{!}{\includegraphics{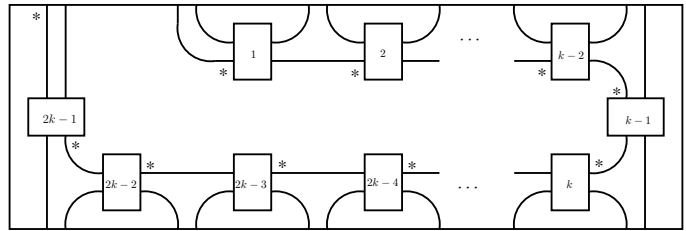}}
\end{center}
\caption{The tangle $V^{2k,+}$}
\label{fig:tanglev}
\end{figure}
Note that $V^{2k,+} \in {\mathcal T}(2k,+)$ and hence induces a linear isomorphism from $(H^*)^{\otimes (2k-1)} \rightarrow P(H^*)_{2k,+}$.
Further, we see that 
\begin{eqnarray*}\alpha_h(Z_{V^{2k,+}}(Fa^1 \otimes Fa^2 \otimes \cdots \otimes Fa^{2k-1})) = \\
Z_{V^{2k,+}}(F(h_1a^1) \otimes Fa^2 \otimes F(h_2a^3) \otimes Fa^4 \otimes \cdots \otimes Fa^{2k-2} \otimes F(h_ka^{2k-1})).\end{eqnarray*}

Thus it suffices to see that $dim(\{ a^1 \otimes \cdots \otimes a^{2k-1} \in H^{\otimes (2k-1)} : a^1 \otimes \cdots \otimes a^{2k-1} = h_1a^1 \otimes a^2 \otimes h_2a^3 \otimes a^4 \otimes \cdots \otimes a^{2k-2} \otimes  h_ka^{2k-1} \} \leq n^{2k-2}$ or equivalently that 
$dim(\{ x^1 \otimes \cdots \otimes x^{k} \in H^{\otimes k} : x^1 \otimes \cdots \otimes x^{k} = h_1x^1 \otimes  h_2x^2 \otimes  \cdots \otimes  h_kx^{k} \} \leq n^{k-1}$. Now observe that if $x^1 \otimes \cdots \otimes x^{k} = h_1x^1 \otimes  h_2x^2 \otimes  \cdots \otimes  h_kx^{k}$, then
\begin{eqnarray*}
x^1 \otimes \cdots \otimes x^{k} &=& h_1x^1 \otimes  h_2x^2 \otimes  \cdots \otimes  h_kx^{k}\\
&=& h_1x^1 \otimes  \Delta_{k-1}(h_2)(x^2 \otimes \cdots \otimes x^k)\\
&=&h_1 \otimes  \Delta_{k-1}(h_2Sx^1)(x^2 \otimes \cdots \otimes x^k)\\
&=& h_1 \otimes  h_2Sx^1_{k-1}x^2 \otimes  \cdots \otimes  h_kSx^1_1x^{k}
\end{eqnarray*}
This is clearly in the image of the map $H^{\otimes k-1} \rightarrow H^{\otimes k}$ given by $z^1 \otimes \cdots \otimes z^{k-1} 
\mapsto h_1 \otimes h_2z^2 \otimes \cdots \otimes h_kz^{k-1}$ and so the required dimension estimate follows, establishing the characterisation of $Q_{k,+}$.

Now note that, $X \in Q_{k,-} \Leftrightarrow Z_R(X) \in Q_{k,+}$ (where $R$ is the one-rotation tangle on $(k,-)$ boxes, since $Q$ is a planar subalgebra  of $^{(2)}\!P(H^*)$). The action of $R$ on $Q_{k,-}$ is given by
the restriction of the action of the two-rotation tangle on $P(H^*)_{2k,+}$. Hence the 
asserted characterisation of $Q_{k,-}$ follows directly from that of $Q_{k,+}$.
\end{proof}

\section{The main theorem}
We collect the results of the previous statements into a single main theorem.

\begin{theorem}\label{thm:main}
Let $H$ be a finite-dimensional, semisimple and cosemisimple Hopf algebra over $\k$ of dimension $n = \delta^2$
with Drinfeld double $\tilde{D}(H)$. The map
$$P(\tilde{D}(H))_{2,+}\  \longrightarrow\  {}^{(2)}\!P(H^*)_{2,+}$$
defined in Proposition \ref{main}  extends to an  injective planar algebra morphism $P(\tilde{D}(H))\  \longrightarrow\  {}^{(2)}\!P(H^*)$
whose image $Q$ is characterised as follows: $Q_{k,+}$ (resp. $Q_{k,-}$) is the set of all $X \in P_{2k,+}$  such the element on the left (resp. right)
in Figure \ref{fig:charac2} equals $X$.
\end{theorem}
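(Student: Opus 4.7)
The plan is to observe that Theorem \ref{thm:main} is essentially just the assembly of the three propositions already proved in earlier sections, together with a translation of the fixed-point conditions of Proposition \ref{charac} into the pictorial form of Figure \ref{fig:charac2}.

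First, I would invoke Proposition \ref{main} directly to conclude that the map
$f \otimes a \mapsto f_1(Sa_1) f_3 \rtimes Sa_2 \rtimes f_2$
from $P(\tilde{D}(H))_{2,+}$ to $\,{}^{(2)}\!P(H^*)_{2,+} = P(H^*)_{4,+}$ extends uniquely to a planar algebra morphism $\Psi: P(\tilde{D}(H)) \rightarrow {}^{(2)}\!P(H^*)$. Next, Proposition \ref{injective} gives that each component $\Psi_{k,\epsilon}$ is injective, so $\Psi$ is an embedding of planar algebras and its image $Q$ is a planar subalgebra of ${}^{(2)}\!P(H^*)$ with $Q_{k,\pm} \subseteq P(H^*)_{2k,+}$.

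For the characterisation of $Q$, I would appeal to Proposition \ref{charac}, which states
\[Q_{k,+} = \{X \in P(H^*)_{2k,+} : \alpha_h(X) = X\}, \qquad Q_{k,-} = \{X \in P(H^*)_{2k,+} : \beta_h(X) = X\},\]
with $\alpha_h$ and $\beta_h$ as defined in Figure \ref{fig:action}. All that then remains is to recognise that the conditions $\alpha_h(X) = X$ and $\beta_h(X) = X$ are precisely what the two diagrams in Figure \ref{fig:charac2} express. Unwinding the definition of $\alpha_h$ (respectively $\beta_h$), the $k-1$ successive factors $Fh_1, Fh_2, \ldots, Fh_{k-1}$ of Figure \ref{fig:action} are precisely the labels appearing in the left-hand (respectively right-hand) picture of Figure \ref{fig:charac2}, so these pictorial equations are literal rewritings of $\alpha_h(X) = X$ and $\beta_h(X) = X$.

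The only step that requires any real work is the translation between the pictorial and algebraic formulations above, but even this is essentially a matter of reading off the definitions rather than computation. Since all three propositions have already been proved, the theorem follows by a short paragraph of assembly with no genuine obstacle remaining.
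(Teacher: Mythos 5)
Your proposal is correct and follows exactly the paper's own argument: the theorem is proved by assembling Propositions \ref{main}, \ref{injective} and \ref{charac} and observing that Figures \ref{fig:action} and \ref{fig:charac2} express the same conditions (note only that the figures involve $k$ factors $Fh_1,\dots,Fh_k$, not $k-1$). No further comment is needed.
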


\begin{figure}[!h]
\begin{center}
\psfrag{1}{\huge $Fh_1$}
\psfrag{2}{\huge $Fh_2$}
\psfrag{3}{\huge $Fh_3$}
\psfrag{km1}{\huge $Fh_{k-1}$}
\psfrag{k}{\huge $Fh_k$}
\psfrag{2k-4}{\large $2k-4$}
\psfrag{2k-3}{\large $2k-3$}
\psfrag{2k-2}{\large $2k-2$}
\psfrag{2k-1}{\Large $2k-1$}
\psfrag{X}{\huge $X$}
\resizebox{12cm}{!}{\includegraphics{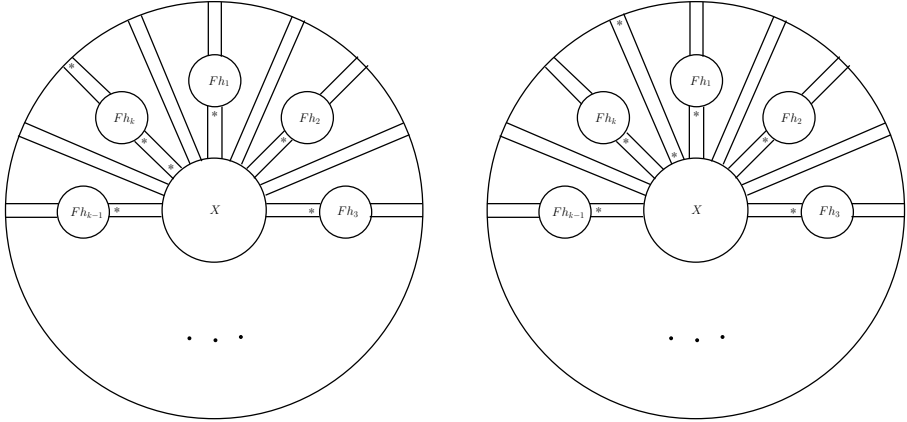}}
\end{center}
\caption{Characterisation of the image}
\label{fig:charac2}
\end{figure}

\begin{proof}
This follows from Propositions \ref{main}, \ref{injective} and \ref{charac},
after observing that Figures \ref{fig:action} and \ref{fig:charac2} are equivalent.
\end{proof}

We conclude with a few remarks. First, the main theorem above also allows us to conclude that there is an explicitly characterised  planar subalgebra of ${}^{(2)}\!P(H^{op})$ that is isomorphic to $P(\tilde{D}(H))$. This is because $\tilde{D}(H)$ and $\tilde{D}((H^{op})^{*})$ are isomorphic. This leads to the natural question as to whether in some precise sense, $P(\tilde{D}(H))$ is the `smallest' planar algebra that maps into both ${}^{(2)}\!P(H^*)$ and ${}^{(2)}\!P(H^{op})$. Finally, we hope to explore the subfactor theoretic aspects
of this result and some generalisations in a future publication.


\begin{thebibliography}{KdySnd2006}

\bibitem[DeKdy2015]{DeKdy2015}
Sandipan De and Vijay Kodiyalam,
{\em Note on infinite iterated crossed products of Hopf algebras and the Drinfeld double},
Journal of Pure and Applied Algebra, 219 No. 12, 5305 - 5313, 2015.
arXiv:1503.05489.

\bibitem[Ghs2011]{Ghs2011}
Shamindra Ghosh,
{\em Planar algebras: A category theoretic viewpoint},
Journal of Algebra, 339 No.1, 27-54, 2011.
arXiv:1503.05489.


\bibitem[Jjo2008]{Jjo2008}
S. Jijo,
{\em Planar algebra associated to the Asymptotic inclusion of a Kac
algebra subfactor},
Ph.D. thesis, Chennai Mathematical Institute, 2008.

\bibitem[Jns1999]{Jns1999} V. F. R. Jones,
{\it Planar algebras},
To appear in New Zealand J. Math.
arXiv:math/9909027.



\bibitem[KdySnd2004]{KdySnd2004}
Vijay Kodiyalam and V. S. Sunder,
{\em On Jones' planar algebras},
Journal of Knot Theory and its Ramifications, 13 No. 2, 219 - 248, 2004.


\bibitem[KdySnd2008]{KdySnd2008}
Vijay Kodiyalam and V. S. Sunder,
{\it From subfactor planar algebras to subfactors},
International Journal of Mathematics, 20 No. 10, 1207-1231, 2009.


\bibitem[KdySnd2006]{KdySnd2006}
Vijay Kodiyalam and V. S. Sunder,
{\em The planar algebra of a semisimple and cosemisimple Hopf algebra},
Proceedings of the Indian Academy of Sciences, 116 No. 4, 443 - 458, 2006.

\bibitem[Ksh1996]{Ksh1996}
R. M. Kashaev,
{\em The Heisenberg double and the pentagon relation},
Algebra i Analiz, 8 No. 4, 63 - 74, 1996.

\bibitem[Ksh2011]{Ksh2011}
R. M. Kashev,
{\em R-matrix knot invariants and triangulations},
Contemporary Mathematics 541, 69 - 81, 2011.

\bibitem[LrsRdf1988]{LrsRdf1988} R.G. Larson and D. E. Radford, {\em Finite-dimensional
cosemisimple Hopf algebras in characteristic $0$ are semisimple}, Journal of 
Algebra,
{ 117},  267--289, 1988.

\bibitem[TngGlk1998]{TngGlk1998} Pavel Etingof and Shlomo Gelaki,  {\em On
 finite-dimensional  semisimple and cosemisimple Hopf algebras in
 positive characteristic.} International Mathematics Research Notices,
 {16}, 851--864,   1998.  




%
%
%
%
%
%
%
%
%
%
%
%
%
%
%
%
%
%



\end{thebibliography}
\end{document}